\newtheorem{thm}{Theorem}[section]
\newtheorem{lem}[thm]{Lemma}
\newtheorem{cor}[thm]{Corollary}
\theoremstyle{definition}
\newtheorem{remark}[thm]{Remark}
\newtheorem{exa}[thm]{Example}
\newcommand{\tabincell}[2]{\begin{tabular}{@{}#1@{}}#2\end{tabular}}
\renewcommand\proofname{\textit{Proof}}
\title{\bf \Large A complete characterization of graphs with exactly two positive eigenvalues}
\author{
{\small  Fang Duan$^a$, \ \ Qiongxiang Huang$^b$, \ \ Xueyi Huang$^c$, \ \ Zoran Stani\'c$^d$, \ \  Jianfeng Wang$^{e,}$\footnote{Corresponding author.
\newline{\it \hspace*{5mm}Email addresses:} fangbing327@126.com (F. Duan), huangqx@xju.edu.cn (Q.X. Huang), huangxymath@163.com (X.Y. Huang),
zstanic@matf.bg.ac.rs (Z. Stani\'c), jfwang@sdut.edu.cn (J.F.Wang).}}\\[2mm]
\footnotesize $^a$School of Mathematics Science, Xinjiang Normal University, Urumqi 830017, China\\
\footnotesize $^b$College of Mathematics and Systems Science, Xinjiang University, Urumqi  830046,  China\\
\footnotesize $^c$School of Mathematics, East China University of Science and Technology, Shanghai 200237, China\\
\footnotesize $^d$Faculty of Mathematics, University of Belgrade, Studentski trg 16, 11 000 Belgrade, Serbia\\
\footnotesize $^e$School of Mathematics and Statistics, Shandong University of Technology, Zibo 255049, China}
\date{ }
\date{}
\begin{document}
\maketitle

\begin{abstract}
In 1977 Smith characterized graphs with exactly one positive eigenvalue. Since then, many particular results related to graphs with exactly two positive eigenvalues have emerged. In this paper we conclude this investigation by giving a full characterization of these graphs.

\medskip

\noindent \textit{Keywords:} Congruent vertex; Positive (negative) inertia index; Nullity; Forbidden subgraph\\[-2mm]

\noindent \textit{AMS classification:} 05C50
\end{abstract}

\baselineskip=0.21in

\section{Introduction}
For a simple undirected graph $G$ we write $n, V(G), E(G)$ and $A(G)$ to denote the number of its vertices (also known as the \textit{order}), its vertex set, edge set and the standard adjacency matrix, respectively. The eigenvalues of $A(G)$, denoted by $\lambda_1(G)\geq  \lambda_2(G)\geq  \cdots \geq \lambda_n(G)$, are known as the \emph{eigenvalues} of $G$. Together with their repetitions they form the \emph{spectrum} of $G$ denoted by $\mathrm{Spec}(G)$. The number of positive eigenvalues $p(G)$, the number of negative eigenvalues $n(G)$ and the number of zero eigenvalues $\eta(G)$ of $G$ are called the \emph{positive inertia index}, the \emph{negative inertia index} and the \emph{nullity} of $G$, respectively. Evidently, we have $p(G)+n(G)+\eta(G)=n$.

The invariants $p(G)$, $n(G)$ and $\eta(G)$ have been broadly investigated in the last decades. For some results on this topic, we refer the reader to \cite{F.Duan, H.C.Ma, M.R.Oboudi1, M.R.Oboudi3,M.Petrovi'c, A.Torga, X.L.Wang, G.H.Yu1}, and references therein. In particular, the problem of  characterizing  graphs with a comparatively small positive inertia index
originates from the work of Smith \cite{J.H.Smith} who proved that a graph has exactly one positive eigenvalue if and only if it is the disjoint union of a complete multipartite graph and some isolated vertices. Graphs with at most two non-negative eigenvalues are determined by Petrovi\' c~\cite{M.Petrovi'c} (see also Obudi's~\cite{M.R.Oboudi3}). The next natural step is a characterization of graphs with exactly two positive eigenvalues. The main challenge in this task lies in fact that such graphs are partitioned into (a) a comparatively large number of infinite families determined by various structural parameters and (b) additional individual graphs. Therefore, so far we had only  sporadic results on this topic, and some of them can be found in \cite{Bei,the}; for example, all bipartite graphs and all line graphs with exactly two positive eigenvalues are known.  To list more known results, we need to introduce some notation. Let $\mathcal{T}_n$ denote the collection of $n$-vertex graphs with exactly two positive eigenvalues. For $G\in \mathcal{T}_n$, we have $n(G)\geq 1$, and so $\eta(G)\leq n-3$. Accordingly, $\mathcal{T}_n$ can be partitioned as $\mathcal{T}_n=\mathcal{T}^0_n\cup \mathcal{T}^1_n \cup\cdots \cup \mathcal{T}^{n-3}_n$, where $\mathcal{T}^i_n=\{G\in \mathcal{T}_n\mid \eta(G)=i\}$, for $0\leq i\leq n-3$. In the past four decades, efforts of several researchers have determined some subclasses of $\mathcal{T}_n$. Petrovi\'{c}~\cite{M.Petrovi'c} characterized  graphs of $\mathcal{T}^0_n$ by means of induced subgraphs. A~different characterization of the same graphs is given by Oboudi~\cite{M.R.Oboudi3}. Following the latter reference, the authors of ~\cite{F.Duan} reported a characterization of graphs belonging to~$\mathcal{T}^1_n$.

In this paper we conclude this investigation by giving a full recursive characterization of all infinite families of graphs belonging to the class $\mathcal{T}^i_n=\{G\in \mathcal{T}_n\mid \eta(G)=i\},~ 2\leq i\leq n-3$, and by determining all individual graphs of the same class. In the latter case we use a mixture of a theoretical consideration and computer search. Namely, we use a theoretical approach to narrow down the search on such graphs by limiting their order to be less than or equal to 14 and by proving that they belong to certain structural types. Then we use the computer to obtain all of them.

To state our contribution we need some notation. We denote by $G+H$ the disjoint union of  graphs $G$ and $H$. We write  $K_{n_1, n_2, \ldots, n_l}$ for the complete multipartite graph with $l\geq 2$ parts of sizes $n_1, n_2, \ldots, n_l$, and $K_n$ for the complete graph with $n$ vertices. We give a recursive characterization of graphs of $\mathcal{T}^s_n$ for $2\leq s\leq n-3$ as follows. (We point out that definitions of congruent vertices of I-type, II-type and III-type are given in the next section.)
\begin{thm}\label{thm-1-2}
Let $G$ be a  graph of order $n\ge 6$.
\begin{enumerate}[(a)]
\item If $G$ is disconnected,  then $G\in \mathcal{T}_n^s~(2\leq s\leq n-3)$ if and only if $G\cong H+lK_1$, where $H\in \mathcal{T}^{s-l}_{n-l}$ is connected and $1\leq l\leq s$, or $G\cong K_{t_1, t_2, \ldots, t_p}+K_{r_1, r_2, \ldots, r_q}+mK_1$, where $m\geq 0$ and $(t_1-1)+(t_2-1)+\cdots +(t_p-1)+(r_1-1)+(r_2-1)+\cdots +(r_q-1)=s-m$.
\item If $G$ is connected, then
\begin{enumerate}[(b1)]
\item $G\in \mathcal{T}_n^2$ if and only if $G$ is one of the $175$ graphs (called underivable graphs) listed in Table~\ref{tab-3}, or $G$ is obtained by adding a congruent vertex of I-type, II-type or III-type to a graph $G'\in \mathcal{T}_{n-1}^1$, and
\item  $G\in \mathcal{T}_n^s$ for $3\leq s \leq n-3$ if and only if $G$ obtained by adding a congruent vertex of I-type, II-type or III-type to a graph $G'\in\mathcal{T}_{n-1}^{s-1}$.
\end{enumerate}
\end{enumerate}
\end{thm}

Together with results of \cite{F.Duan} and \cite{M.R.Oboudi3}, Theorem~\ref{thm-1-2} gives a full characterization of graphs with exactly two positive eigenvalues.




The remainder of the paper is organized as follows. In Section~\ref{sec:prel} we list some known results and fix additional terminology and notation. For the sake of completeness, the main results of \cite{M.R.Oboudi3, F.Duan} concerning graphs of $\mathcal{T}^0_n\cup \mathcal{T}^1_n$ are reviewed in Section~\ref{sec:3}.  Section~\ref{sec:4} contains the proof of Theorem~\ref{thm-1-2}; the main contribution of this paper. A recapitulation of the entire paper is given in Section~\ref{sec:rec}.

\section{Preliminaries}\label{sec:prel}

We believe that the reader is familiar with fundamental concepts in spectral graph theory which, for example, include the eigenvalue interlacing. In what follows we list some basic results, mostly to make the paper more self-contained since all of them are also broadly known.

%
%

\begin{lem}\cite{D.Cvetkovi}\label{lem-2-1}
Let $H$ be an induced subgraph of a graph $G$. Then $p(H)\leq p(G)$.
\end{lem}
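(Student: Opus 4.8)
The plan is to derive this as a direct consequence of the Interlacing theorem (Theorem~\ref{thm-2-1}), invoking only its left-hand inequality $\lambda_i(G) \geq \lambda_i(H)$. First I would set $k = p(H)$ and order the eigenvalues of $H$ as $\lambda_1(H) \geq \lambda_2(H) \geq \cdots \geq \lambda_m(H)$, so that by the definition of the positive inertia index exactly the first $k$ of these are strictly positive, that is $\lambda_1(H) \geq \cdots \geq \lambda_k(H) > 0$ while $\lambda_{k+1}(H) \leq 0$.

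Next, since $H$ is an induced subgraph of $G$, its order satisfies $m \leq n$, and moreover $k \leq m$ because a graph cannot have more positive eigenvalues than vertices. Hence each index $i \in \{1, \ldots, k\}$ lies in the admissible range $1 \leq i \leq m$ of the Interlacing theorem. Applying $\lambda_i(G) \geq \lambda_i(H)$ for these indices yields $\lambda_i(G) \geq \lambda_i(H) > 0$ for $i = 1, \ldots, k$. Thus $G$ possesses at least $k$ strictly positive eigenvalues, which gives $p(G) \geq k = p(H)$, as required.

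Because the entire argument is a one-step specialization of interlacing, I do not anticipate any genuine obstacle; the result is essentially immediate. The only point meriting a moment's care is verifying $k \leq m$ so that interlacing is never invoked at an index exceeding the order of $H$, and this is automatic. In particular, neither the right-hand interlacing bound nor Sylvester's law of inertia is needed for this direction, though the latter will presumably be the workhorse for the congruent-transformation arguments later in the paper.
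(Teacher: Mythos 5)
Your proof is correct: the paper itself gives no proof of this lemma (it is quoted from the reference \cite{D.Cvetkovi}), but your derivation via the left-hand inequality $\lambda_i(G)\geq\lambda_i(H)$ of the Interlacing theorem is exactly the standard argument the paper implicitly relies on by stating the lemma immediately after Theorem~\ref{thm-2-1}. Nothing further is needed.
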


A \textit{pendant vertex} of a graph is a vertex of degree 1.

\begin{lem}\cite{H.C.Ma} \label{lem-2-2}
Let $G$ be a graph containing a pendant vertex, and let $H$ be the induced subgraph of~$G$ obtained by deleting the pendant vertex together
with its unique neighbour. Then $p(G)=p(H)+1$, $n(G)=n(H)+1$ and $\eta(G)=\eta(H)$.
\end{lem}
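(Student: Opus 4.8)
The plan is to exhibit an explicit congruent transformation that decouples the pendant edge from the rest of the graph, and then invoke Sylvester's law of inertia, which fits the congruent-transformation philosophy of the paper. Label the pendant vertex $v$ and its unique neighbour $u$, and order the vertices of $G$ as $v, u$, followed by the vertices of $H = G - \{u, v\}$. Since $v$ is adjacent only to $u$, the adjacency matrix takes the block form
\begin{equation*}
A(G) = \begin{pmatrix} 0 & 1 & \mathbf{0}^{T} \\ 1 & 0 & b^{T} \\ \mathbf{0} & b & A(H) \end{pmatrix},
\end{equation*}
where $b$ records the adjacencies between $u$ and $V(H)$, and the zero blocks in the first row and column reflect that $v$ has no neighbour in $H$.

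First I would use the row and column of $v$ to clear the vector $b$. Concretely, for each vertex $w$ of $H$ with $b_{w} = 1$, subtract the row of $v$ from the row of $w$ and perform the matching column operation; this is a congruent transformation $A(G) \mapsto C^{T} A(G) C$. Because the row of $v$ has its only nonzero entry in the column of $u$, each such operation kills exactly the $(w, u)$ entry (and, symmetrically, the $(u, w)$ entry) without disturbing any other position, in particular leaving the $2 \times 2$ block on $\{v, u\}$ and the block $A(H)$ untouched. After processing all of $V(H)$ the matrix becomes block diagonal,
\begin{equation*}
\begin{pmatrix} 0 & 1 \\ 1 & 0 \end{pmatrix} \oplus A(H).
\end{equation*}

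Finally I would read off the inertia. The $2 \times 2$ block $\left(\begin{smallmatrix} 0 & 1 \\ 1 & 0 \end{smallmatrix}\right)$ has eigenvalues $+1$ and $-1$, hence contributes exactly one positive and one negative eigenvalue and no zero eigenvalue. Since congruence preserves the numbers of positive, negative, and zero eigenvalues by Sylvester's law of inertia, and the inertia of a block-diagonal matrix is the sum of the inertias of its blocks, we obtain $p(G) = p(H) + 1$, $n(G) = n(H) + 1$, and $\eta(G) = \eta(H)$, as claimed.

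I would expect the only delicate point to be the bookkeeping in the clearing step: one must verify that using the row and column of the pendant vertex removes the $u$--$H$ coupling $b$ while creating no new nonzero off-diagonal entries and no new diagonal term at $u$. This is exactly where the hypothesis that $v$ is pendant is essential, since it forces the row of $v$ to be supported on the single coordinate $u$; for a neighbour of higher degree the analogous elimination would reintroduce couplings among the remaining vertices and the clean splitting would fail.
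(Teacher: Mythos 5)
Your proof is correct, but there is nothing in the paper to compare it against: Lemma \ref{lem-2-2} is quoted from \cite{H.C.Ma} and used as a black box, with no proof given in the paper itself. Your argument is the standard proof of this fact, and it fits the paper's own toolkit precisely, since it reduces everything to Sylvester's law of inertia, recorded in the paper as Lemma \ref{lem-2-3}. The bookkeeping point you flag as delicate is indeed the crux, and it does go through: writing $C=I-\sum_{w\in V(H),\,b_w=1}e_we_v^{T}$, one computes $CA(G)C^{T}=A(G)-\sum_{w}e_we_u^{T}-\sum_{w}e_ue_w^{T}$, because $A(G)e_v=e_u$ and $e_v^{T}A(G)=e_u^{T}$ (the pendant row is supported on the single coordinate $u$), while the only potential new term carries the factor $e_u^{T}e_v=0$ and so vanishes; hence the $u$--$H$ couplings are erased, no diagonal entry is created at $u$, the block $A(H)$ is untouched, and the matrix splits as $\left(\begin{smallmatrix}0&1\\1&0\end{smallmatrix}\right)\oplus A(H)$. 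Since that $2\times 2$ block has inertia $(1,1,0)$ and inertia is additive over direct sums, the three claimed equalities follow at once.
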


%

We restate the aforementioned result of Smith.

\begin{thm}\cite{J.H.Smith} \label{thm-2-2}
A graph has exactly one positive eigenvalue if and only if its non-isolated vertices form a complete multipartite graph.
\end{thm}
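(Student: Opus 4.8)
The plan is to first discard isolated vertices, since they contribute only zero eigenvalues and hence leave $p(G)$ unchanged; writing $H$ for the subgraph induced by the non-isolated vertices, it then suffices to prove that a graph $H$ with no isolated vertices satisfies $p(H)=1$ if and only if $H$ is complete multipartite. Throughout I will use the elementary reformulation that $H$ is complete multipartite exactly when non-adjacency is transitive on $V(H)$, equivalently when $H$ has no induced $K_1\cup K_2$ (three vertices spanning exactly one edge): the parts are then the equivalence classes of the relation ``equal or non-adjacent''.

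For sufficiency I would argue by congruence. If $H=K_{n_1,\dots,n_l}$ with $l\ge 2$, then any two vertices in the same part are non-adjacent and have identical rows and columns in $A(H)$; subtracting one such row (and the matching column) from the other is a congruence that turns the second vertex into an isolated one. Iterating over all parts reduces $A(H)$ by congruence to $A(K_l)\oplus\mathbf 0$, and since $K_l$ has eigenvalues $l-1,-1,\dots,-1$ we get $p(K_l)=1$. By Lemma~\ref{lem-2-3} (Sylvester's law) congruence preserves the positive inertia index, so $p(H)=1$.

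For necessity I would prove the contrapositive: if $H$ has no isolated vertices and is not complete multipartite, then $p(H)\ge 2$. If $H$ is disconnected it contains two independent edges, i.e. an induced $2K_2$, and as $p(2K_2)=2$ Lemma~\ref{lem-2-1} gives $p(H)\ge 2$. If $H$ is connected, then since non-adjacency fails to be transitive there are distinct vertices $a,b,c$ with $a\sim c$ but $b\not\sim a$ and $b\not\sim c$. I would take a shortest path $b=y_0,y_1,\dots,y_t$ from $b$ to $\{a,c\}$ (so $t\ge 2$ and, say, $y_t=a$) and examine the four vertices $y_{t-2},y_{t-1},a,c$, where $y_{t-2}=b$ when $t=2$. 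Minimality of the path forces $y_{t-2}\not\sim a$ and $y_{t-2}\not\sim c$ (this holds by hypothesis when $t=2$, since then $y_{t-2}=b$), while $y_{t-2}\sim y_{t-1}\sim a\sim c$. The single question whether $y_{t-1}\sim c$ then splits into two cases: a negative answer makes $\{y_{t-2},y_{t-1},a,c\}$ induce a $P_4$, and a positive answer makes it induce a paw (a triangle with one pendant edge). Since $p(P_4)=p(\mathrm{paw})=2$, Lemma~\ref{lem-2-1} again yields $p(H)\ge 2$.

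The main obstacle is exactly this connected case, namely upgrading the purely local witness of non-multipartiteness (an induced $K_1\cup K_2$, which on its own only forces $p\ge 1$) into a genuine obstruction with $p\ge 2$; the shortest-path selection is the device that globalizes the local witness. The remaining work is routine bookkeeping: checking that the four chosen vertices are distinct and that no unwanted edges occur, so that the induced subgraph is honestly $P_4$ or the paw. The value $p(\mathrm{paw})=2$ itself is immediate from Lemma~\ref{lem-2-2}, since deleting the pendant vertex together with its neighbour leaves $K_2$.
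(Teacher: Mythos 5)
The paper never proves this statement: Theorem \ref{thm-2-2} is imported from Smith \cite{J.H.Smith} as a cited black box, so there is no internal proof to compare yours against; what can be judged is whether your argument is correct, and it is. Your sufficiency direction is, in effect, an iterated application of what the paper later calls the transformation of I-type: two vertices in the same part of $K_{n_1,\ldots,n_l}$ are non-adjacent with equal neighbourhoods, and your row-and-column subtraction is exactly the congruence that removes such a vertex, reducing $A(H)$ to $A(K_l)\oplus\mathbf{0}$, whence $p(H)=p(K_l)=1$ by Lemma \ref{lem-2-3}. For necessity you correctly identify the real difficulty: an induced $K_1\cup K_2$ (the raw witness that $H$ is not complete multipartite) has $p=1$ and proves nothing by itself, so it must be upgraded to an induced subgraph with two positive eigenvalues. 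Your shortest-path device does this soundly: the suffix of a shortest path from $b$ to the set $\{a,c\}$ is again a shortest path to that set, which forces $y_{t-2}\not\sim a$, $y_{t-2}\not\sim c$ and guarantees $y_{t-1},y_{t-2}\notin\{a,c\}$, so the four vertices induce either $P_4$ or the paw according to whether $y_{t-1}\sim c$; both have $p=2$ (the paw via Lemma \ref{lem-2-2}, since deleting the pendant vertex together with its neighbour leaves $K_2$), and the disconnected case is handled by an induced $2K_2$. Combined with Lemma \ref{lem-2-1} this yields $p(H)\ge 2$, so your proposal stands as a correct, self-contained proof of the cited result using only the paper's own lemmas.
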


We now proceed with some additional definitions, terminology and notation. In particular, we introduce the four types of graph transformations that will be frequently used in the forthcoming sections.

We write $C_n$ and $P_n$ for the cycle and the path with $n$ vertices and denote by $G-v$ (resp.~$G-e$) the graph obtained by deleting a vertex $v$ (resp.~edge~$e$) from a graph $G$.

For any $v\in V(G)$ and $W\subseteq V(G)$, we set $N_W(v)=\{u\in W\mid uv \in E(G)\}$. For $W=V(G)$ we write $N_G(v)$ or just $N(v)$ (if the graph is clear from the context) instead of $N_{V(G)}(v)$, and we write $N_G[v]$ or $N[v]$ for $N_G(v)\cup \{v\}$. Of course, $N(v)$ and $N[v]$ are known as the \emph{neighbourhood} and the \emph{closed neighbourhood} of $v$, respectively. The cardinality of the former set is known as the \textit{degree} of~$v$, denoted by $d(v)$. We also use $G[W]$ to denote the subgraph induced by the vertices of $W$.

Let $G$ be a graph with vertex set $\{v_1, v_2, \ldots, v_n\}$. We denote by $G[K_{t_1}, K_{t_2}, \ldots, K_{t_n}]$ the \emph{generalized lexicographic product} of
$G$ with $K_{t_1}, K_{t_2},\ldots, K_{t_n}$, that is a graph obtained from $G$ by replacing the vertex $v_j$ with $K_{t_j}$ and then joining every vertex of $K_{t_i}$ to every vertex of $K_{t_j}$ if and only if $v_i$ is adjacent to $v_j$ in $G$.

For the graphs $G_1$ and $G_2$, a vertex $u\in V(G_1)$ and an integer $k$ $(1\leq k \leq |V(G_2)|)$, the \emph{$k$-joining graph} $G_1(u)\odot^k G_2$ is  obtained from $G_1+G_2$ by joining $u$ to any $k$ vertices of $G_2$.

Following \cite{M.R.Oboudi3}  we introduce  a particular class of graphs. For an integer $n\geq 2$, let $K_{\lceil\frac{n}{2}\rceil}$ and $K_{\lfloor\frac{n}{2}\rfloor}$ be the vertex disjoint complete graphs with vertex sets $V=\{v_1, v_2, \ldots, v_{\lceil\frac{n}{2}\rceil}\}$ and $W=\{w_1, w_2,  \ldots, w_{\lfloor\frac{n}{2}\rfloor} \}$, respectively. The graph $G_n$ is defined as the graph obtained from $K_{\lceil\frac{n}{2}\rceil}+K_{\lfloor\frac{n}{2} \rfloor}$ by applying the following operations:
\begin{itemize}
\item For $n$ even, we insert new edges into $K_{\frac{n}{2}}+K_{\frac{n}{2}}$ in such a way that
\begin{equation*}\begin{aligned}
N_W(v_1)&=\emptyset\subset N_W(v_2)=\{w_{\frac{n}{2}}\}\subset N_W(v_3)=\{w_{\frac{n}{2}},  w_{\frac{n}{2}-1}\}\subset\cdots\subset N_W(v_{\frac{n}{2}-1})\\
&=\{w_{\frac{n}{2}}, w_{\frac{n}{2}-1}, \ldots, w_{3} \}\subset N_W(v_{\frac{n}{2}})=\{w_{\frac{n}{2}}, w_{\frac{n}{2}-1}, \ldots, w_{2} \}.
\end{aligned}\end{equation*}

\item For $n$ odd, we insert new edges into $K_{\frac{n+1}{2}}+K_{\frac{n-1}{2}}$ in such a way that
\begin{equation*}\begin{aligned}
N_W(v_1)&=\emptyset\subset N_W(v_2)=\{w_{\frac{n-1}{2}}\}\subset N_W(v_3)=\{w_{\frac{n-1}{2}}, w_{\frac{n-1}{2}-1}\}\subset\cdots\subset N_W(v_{\frac{n+1}{2}-1})\\
&=\{w_{\frac{n-1}{2}}, w_{\frac{n-1}{2}-1}, \ldots, w_{2} \}\subset N_W(v_{\frac{n+1}{2}})=\{w_{\frac{n-1}{2}}, w_{\frac{n-1}{2}-1}, \ldots, w_{1} \}.
\end{aligned}\end{equation*}
\end{itemize}

We call $G_n$ the \emph{reduced half-complete graph}. By definition, $G_2\cong 2K_1$, $G_3\cong P_3$ and $G_4\cong P_4$. The graphs $G_5$,  $G_6$ and, in general, $G_{2r}$, $G_{2r+1}$ are illustrated in Figure~\ref{fig-1}. Clearly, $G_n$ is obtained from $G_{n+1}$ by deleting the vertex with maximum (resp.~minimum) degree when $n$ is even (resp.~odd). For example, by observing Figure~~\ref{fig-1} we get that $G_{2r}$ is obtained by deleting the vertex $v_{r+1}$ of $G_{2r+1}$.

\begin{figure}
\begin{center}
\unitlength 1.3mm 
\linethickness{0.4pt}
\ifx\plotpoint\undefined\newsavebox{\plotpoint}\fi 
\begin{picture}(84.5,22.097)(0,0)
\put(28.225,15.142){\circle*{.94}}
\put(28.314,7.364){\circle*{.94}}
\put(28.374,15.026){\line(0,-1){7.69}}
\put(25.22,18.678){\circle*{.94}}
\multiput(25.104,18.826)(.033619565,-.038423913){92}{\line(0,-1){.038423913}}
\put(21.772,15.054){\circle*{.94}}
\put(21.684,7.452){\circle*{.94}}
\multiput(25.103,18.826)(-.033466019,-.034320388){103}{\line(0,-1){.034320388}}
\put(21.568,15.114){\line(0,-1){7.601}}
\put(21.568,7.513){\line(1,0){6.718}}
\multiput(21.656,15.114)(.033649746,-.039035533){197}{\line(0,-1){.039035533}}
\put(21.656,15.114){\line(1,0){6.629}}
\put(25.131,3.829){\circle*{.94}}
\multiput(21.567,7.425)(.03367619,-.03367619){105}{\line(0,-1){.03367619}}
\multiput(28.462,7.425)(-.033721649,-.036453608){97}{\line(0,-1){.036453608}}
\put(25.103,5.304){\makebox(0,0)[cc]{\scriptsize$w_1$}}
\put(20.507,6.011){\makebox(0,0)[cc]{\scriptsize$w_2$}}
\put(28.727,5.746){\makebox(0,0)[cc]{\scriptsize$w_3$}}
\put(25.279,.177){\makebox(0,0)[cc]{\footnotesize$G_6$}}
\put(24.838,16.882){\makebox(0,0)[cc]{\scriptsize$v_1$}}
\put(24.97,16.838){\oval(12.286,5.392)[]}
\put(24.97,5.79){\oval(12.286,5.392)[]}
\put(9.31,15.143){\circle*{.94}}
\put(9.398,7.365){\circle*{.94}}
\put(9.457,15.027){\line(0,-1){7.69}}
\put(6.303,18.679){\circle*{.94}}
\multiput(6.187,18.827)(.033619565,-.038423913){92}{\line(0,-1){.038423913}}
\put(2.856,15.055){\circle*{.94}}
\put(2.767,7.453){\circle*{.94}}
\multiput(6.186,18.827)(-.033466019,-.034320388){103}{\line(0,-1){.034320388}}
\put(2.652,15.115){\line(0,-1){7.601}}
\put(2.652,7.514){\line(1,0){6.718}}
\multiput(2.739,15.115)(.033654822,-.039030457){197}{\line(0,-1){.039030457}}
\put(2.739,15.115){\line(1,0){6.629}}
\put(6.143,16.75){\oval(12.286,5.392)[]}
\put(6.231,7.425){\oval(12.109,3.182)[]}
\put(6.276,.531){\makebox(0,0)[cc]{\footnotesize$G_5$}}
\put(6.099,17.148){\makebox(0,0)[cc]{\scriptsize$v_1$}}
\put(42.013,18.325){\circle*{.94}}
\put(52.089,18.502){\circle*{.94}}
\put(44.311,18.413){\circle*{.94}}
\put(42.013,4.89){\circle*{.94}}
\put(47.943,18.297){\circle*{.25}}
\put(45.645,4.862){\circle*{.25}}
\put(48.51,4.81){\circle*{.25}}
\put(50.233,4.89){\circle*{.94}}
\multiput(52.061,18.651)(-.03372807,-.060092105){228}{\line(0,-1){.060092105}}
\put(46.344,18.413){\circle*{.94}}
\put(49.181,18.245){\circle*{.25}}
\put(52.089,4.89){\circle*{.94}}
\put(50.33,18.245){\circle*{.25}}
\put(47.06,4.81){\circle*{.25}}
\put(52.061,18.562){\line(0,-1){13.612}}
\multiput(52.061,5.039)(-.033672619,.078916667){168}{\line(0,1){.078916667}}
\multiput(50.205,4.95)(-.033711864,.113855932){118}{\line(0,1){.113855932}}
\multiput(44.194,18.474)(.033619658,-.05817094){234}{\line(0,-1){.05817094}}
\multiput(51.973,18.562)(-.03314286,-.23991071){56}{\line(0,-1){.23991071}}
\put(41.72,3.713){\makebox(0,0)[cc]{\scriptsize$w_1$}}
\put(44.371,3.713){\makebox(0,0)[cc]{\scriptsize$w_2$}}
\put(44.283,19.446){\makebox(0,0)[cc]{\scriptsize$v_2$}}
\put(41.808,19.358){\makebox(0,0)[cc]{\scriptsize$v_1$}}
\put(44.577,4.89){\circle*{.94}}
\put(37.831,17.325){\makebox(0,0)[cc]{\scriptsize$K_{r}$}}
\put(38.096,4.597){\makebox(0,0)[cc]{\scriptsize$K_{r}$}}
\put(47.819,.089){\makebox(0,0)[cc]{\footnotesize$G_{2r}$}}
\put(2.033,16.175){\makebox(0,0)[cc]{\scriptsize$v_3$}}
\put(10.077,16.264){\makebox(0,0)[cc]{\scriptsize$v_2$}}
\put(1.68,6.453){\makebox(0,0)[cc]{\scriptsize$w_1$}}
\put(10.342,6.364){\makebox(0,0)[cc]{\scriptsize$w_2$}}
\put(20.683,16.087){\makebox(0,0)[cc]{\scriptsize$v_3$}}
\put(28.815,16.264){\makebox(0,0)[cc]{\scriptsize$v_2$}}
\put(54.006,18.827){\makebox(0,0)[cc]{\scriptsize$v_{r}$}}
\put(54.094,3.536){\makebox(0,0)[cc]{\scriptsize$w_{r}$}}
\put(48.393,19.136){\oval(17.059,5.922)[]}
\put(48.481,5.082){\oval(17.059,5.922)[]}
\put(75.307,0){\makebox(0,0)[cc]{\footnotesize$G_{2r+1}$}}
\put(69.149,18.324){\circle*{.94}}
\put(79.225,18.501){\circle*{.94}}
\put(71.447,18.409){\circle*{.94}}
\put(69.149,4.889){\circle*{.94}}
\put(75.079,18.296){\circle*{.25}}
\put(72.781,4.86){\circle*{.25}}
\put(75.646,4.808){\circle*{.25}}
\put(77.369,4.889){\circle*{.94}}
\multiput(79.197,18.65)(-.03372807,-.060087719){228}{\line(0,-1){.060087719}}
\put(73.48,18.409){\circle*{.94}}
\put(76.317,18.244){\circle*{.25}}
\put(79.225,4.889){\circle*{.94}}
\put(77.466,18.244){\circle*{.25}}
\put(74.196,4.808){\circle*{.25}}
\put(79.197,18.561){\line(0,-1){13.612}}
\multiput(79.197,5.037)(-.033672619,.078922619){168}{\line(0,1){.078922619}}
\multiput(77.341,4.95)(-.033711864,.113864407){118}{\line(0,1){.113864407}}
\multiput(71.33,18.473)(.033619658,-.058175214){234}{\line(0,-1){.058175214}}
\multiput(79.109,18.561)(-.03314286,-.23989286){56}{\line(0,-1){.23989286}}
\put(68.856,3.712){\makebox(0,0)[cc]{\scriptsize$w_1$}}
\put(71.507,3.712){\makebox(0,0)[cc]{\scriptsize$w_2$}}
\put(71.419,19.443){\makebox(0,0)[cc]{\scriptsize$v_2$}}
\put(68.944,19.357){\makebox(0,0)[cc]{\scriptsize$v_1$}}
\put(71.712,4.889){\circle*{.94}}
\multiput(79.108,18.65)(-.0336296296,-.0461279461){297}{\line(0,-1){.0461279461}}
\put(63.906,17.501){\makebox(0,0)[cc]{\scriptsize$K_{r+1}$}}
\put(63.817,4.596){\makebox(0,0)[cc]{\scriptsize$K_{r}$}}
\put(81.937,19.092){\makebox(0,0)[cc]{\scriptsize$v_{r+1}$}}
\put(82.025,3.624){\makebox(0,0)[cc]{\scriptsize$w_{r}$}}
\put(75.882,19.136){\oval(17.059,5.922)[]}
\put(75.971,5.082){\oval(17.059,5.922)[]}
\end{picture}
\end{center}
\caption{The graphs $G_5$, $G_6$, $G_{2r}$ and $G_{2r+1}.$}\label{fig-1}
\end{figure}


As in \cite{F.Duan}, we introduce the following three  graph transformations.
\begin{enumerate}[(I)]
\item A vertex $u$ of a graph is called a \emph{congruent vertex of I-type} if there exists a vertex $v\not\sim u$ such that  $N(u)=N(v)$. The graph transformation of deleting or adding a congruent vertex of I-type is called the \emph{(graph) transformation of I-type}.

\item A vertex $u$ of a graph is called a \emph{congruent vertex of II-type} if there exist two non-adjacent vertices $v$ and $w$ such that $N(u)$ is a disjoint union of $N(v)$ and $N(w)$. The graph transformation of deleting or adding a congruent vertex of II-type is called the \emph{(graph) transformation of II-type}.

\item An induced quadrangle $C_4=uvxy$ of a graph is called \emph{congruent} if it contains a pair of independent edges, say $uv$ and $xy$, such
that $N(u)\setminus \{y, v\}=N(v)\setminus \{u, x\}$ and $N(x)\setminus \{v, y\}=N(y)\setminus \{x, u\}$, where $uv$ and $xy$ make a pair of \emph{congruent edges} of $C_4$. A vertex is called a \emph{congruent vertex of III-type} if it belongs to some congruent quadrangle. The graph transformation of deleting or adding a congruent vertex of III-type is called the \emph{(graph) transformation of III-type}.
\end{enumerate}

\begin{figure}[t]
\begin{center}
\unitlength 1.2mm 
\linethickness{0.4pt}
\ifx\plotpoint\undefined\newsavebox{\plotpoint}\fi 
\setlength{\unitlength}{3.5pt}
\begin{picture}(88.189,15.556)(0,0)
\put(.536,14.726){\circle*{1.072}}
\put(9.47,14.726){\circle*{1.072}}
\put(.536,3.794){\circle*{1.072}}
\put(9.47,3.794){\circle*{1.072}}
\put(.42,14.82){\line(1,0){9.145}}
\put(.42,3.889){\line(1,0){9.04}}
\put(9.46,14.82){\line(0,-1){10.932}}
\put(.536,10.731){\circle*{1.072}}
\multiput(.526,10.615)(.074090164,.033606557){122}{\line(1,0){.074090164}}
\put(26.288,14.831){\circle*{1.072}}
\put(35.222,14.831){\circle*{1.072}}
\put(26.288,3.899){\circle*{1.072}}
\put(35.222,3.899){\circle*{1.072}}
\put(26.172,14.925){\line(1,0){9.145}}
\put(26.172,3.994){\line(1,0){9.04}}
\put(35.212,14.925){\line(0,-1){10.932}}
\put(26.288,10.836){\circle*{1.072}}
\multiput(26.278,10.72)(.074090164,.033606557){122}{\line(1,0){.074090164}}
\put(32.7,6.317){\circle*{1.072}}
\multiput(35.213,14.926)(-.03364,-.11493333){75}{\line(0,-1){.11493333}}
\multiput(26.173,3.784)(.08689333,.03362667){75}{\line(1,0){.08689333}}
\put(51.515,14.936){\circle*{1.072}}
\put(60.449,14.936){\circle*{1.072}}
\put(51.515,4.004){\circle*{1.072}}
\put(60.449,4.004){\circle*{1.072}}
\put(51.399,15.03){\line(1,0){9.145}}
\put(51.399,4.099){\line(1,0){9.04}}
\put(60.439,15.03){\line(0,-1){10.932}}
\put(51.515,10.941){\circle*{1.072}}
\multiput(51.505,10.825)(.074090164,.033606557){122}{\line(1,0){.074090164}}
\put(55.93,8.734){\circle*{1.072}}
\multiput(55.814,8.724)(.033546099,.045475177){141}{\line(0,1){.045475177}}
\multiput(55.709,8.724)(.035036232,-.033514493){138}{\line(1,0){.035036232}}
\put(77.373,14.936){\circle*{1.072}}
\put(86.307,14.936){\circle*{1.072}}
\put(77.373,4.004){\circle*{1.072}}
\put(86.307,4.004){\circle*{1.072}}
\put(77.257,15.03){\line(1,0){9.145}}
\put(77.257,4.099){\line(1,0){9.04}}
\put(86.297,15.03){\line(0,-1){10.932}}
\put(77.373,10.941){\circle*{1.072}}
\multiput(77.363,10.825)(.074090164,.033606557){122}{\line(1,0){.074090164}}
\put(80.841,9.47){\circle*{1.072}}
\put(80.831,9.565){\line(-2,-3){3.574}}
\multiput(80.726,9.565)(.035466258,.033533742){163}{\line(1,0){.035466258}}
\multiput(80.936,9.46)(-.086,.03343182){44}{\line(-1,0){.086}}
\multiput(80.936,9.565)(-.033445455,.0516){110}{\line(0,1){.0516}}
\put(4.415,0){\makebox(0,0)[cc]{\footnotesize$H\in \mathcal{T}_5^1$}}
\put(30.272,.105){\makebox(0,0)[cc]{\footnotesize$H_1\in \mathcal{T}_6^2(I)$}}
\put(81.777,.63){\makebox(0,0)[cc]{\footnotesize$H_3\in \mathcal{T}_6^2(III)$}}
\put(55.499,.21){\makebox(0,0)[cc]{\footnotesize$H_2\in \mathcal{T}_6^2(II)$}}
\put(31.323,7.568){\makebox(0,0)[cc]{\scriptsize$u$}}
\put(36.894,4.099){\makebox(0,0)[cc]{\scriptsize$v$}}
\put(49.823,15.556){\makebox(0,0)[cc]{\scriptsize$v$}}
\put(49.718,5.045){\makebox(0,0)[cc]{\scriptsize$w$}}
\put(54.448,7.673){\makebox(0,0)[cc]{\scriptsize$u$}}
\put(82.197,8.724){\makebox(0,0)[cc]{\scriptsize$u$}}
\put(88.189,15.136){\makebox(0,0)[cc]{\scriptsize$v$}}
\put(88.189,4.625){\makebox(0,0)[cc]{\scriptsize$x$}}
\put(75.996,4.73){\makebox(0,0)[cc]{\scriptsize$y$}}
\end{picture}
\end{center}
\caption{The graphs $H$, $H_1$, $H_2$ and $H_3$ (for Example~\ref{exa:H}).}\label{fig-2}
\end{figure}

\begin{exa}\label{exa:H}Figure~\ref{fig-2} illustrates the previous graph transformations. There, we take a graph~$H$, and then we obtain $H_1$ (resp.~$H_2$, $H_3$) by adding a congruent vertex $u$ of I-type (resp.~II-type, III-type).
	
	It is worth mentioning that, apart from $u$, the vertices $x,y,v$ are also congruent of III-type in $H_3$. By deleting one of them we obtain a graph which is not necessarily isomorphic to $H$, but its nullity decreases by $1$ and its positive and negative inertia indices remain unchanged.
\end{exa}

	The previous example illustrates the following result of~\cite{F.Duan}.

\begin{lem}\cite{F.Duan}\label{lem-2-4}
If $u$ is a congruent vertex of type I, II or III of $G$, then $p(G)=p(G-u)$, $n(G)=n(G-u)$ and $\eta(G)=\eta(G-u)+1$.
\end{lem}

Finally, we define the fourth graph transformation as follows.

\begin{enumerate}[(IV)]
	\item A vertex $u$ is called a \emph{congruent vertex of IV-type} in a graph $G$ if it is an isolated vertex. The graph transformation of deleting or adding a congruent vertex of IV-type is called the \emph{(graph) transformation of IV-type}. Clearly, we have $p(G)=p(G-u)$, $n(G)=n(G-u)$ and $\eta(G)=\eta(G-u)+1$.
\end{enumerate}


\section{Characterization of graphs of $\mathcal{T}_n^0$ and  $\mathcal{T}_n^1$}\label{sec:3}

Oboudi \cite{M.R.Oboudi3} has determined all graphs belonging to $\mathcal{T}_n^0$.

\begin{thm}\cite{M.R.Oboudi3}\label{thm-2-3}
For a graph $G\in \mathcal{T}_n^0$ the following holds true.
\begin{enumerate}[(i)]
\item If $G$ is disconnected, then $G\cong K_p + K_q$ for some integers $p, q\geq 2$;
\item If $G$ is connected, then there exist positive integers $k$ and $n_1, n_2, \ldots, n_k$ such that $3\leq k\leq 12$, $n_1+n_2 + \cdots + n_k=n$ and $G\cong G_k[K_{n_1}, K_{n_2}, \ldots, K_{n_k}]$.
\end{enumerate}
\end{thm}

Moreover, Oboudi determined the mentioned integers for which $G_k[K_{n_1}, K_{n_2}, \ldots, K_{n_k}]\in \mathcal{T}_n^0$. Accordingly, the set of connected graphs belonging to $\mathcal{T}_n^0$ consists of exactly 48 infinite families and additional 601 individual graphs listed in~\cite{T.Derikvand}. All of them are reviewed Table~\ref{tab-2-1}.

\begin{table}
{\scriptsize
	\captionsetup{font=scriptsize}
\caption{{\scriptsize The 48 infinite families and the 601 individual graphs $G_k[K_{n_1}, K_{n_2}, \ldots, K_{n_k}]$ of $\mathcal{T}_n^0$}.}
\centering
\begin{tabular}{ccc}
\toprule
$k$ &$\#$ infinite families &$\#$ individual graphs  \\
\midrule
\makecell[t]{ 3 } &
\makecell[t]{ 1  (\cite[Theorem~3.4]{M.R.Oboudi3})} &

\makecell[t]{0}\\

\makecell[t]{4 } &
\makecell[t]{ 8 (\cite[Theorem~3.5]{M.R.Oboudi3}) } &
\makecell[t]{ 25 (\cite[Theorem~3.5]{M.R.Oboudi3}(3)) } \\

\makecell[t]{5 } &
\makecell[t]{15  (\cite[Theorem~3.6]{M.R.Oboudi3})} &
\makecell[t]{ 63 (\cite[Theorem~3.6]{M.R.Oboudi3}(5))} \\

\makecell[t]{6 } &
\makecell[t]{ 13 (\cite[Theorem~3.7]{M.R.Oboudi3})} &
\makecell[t]{ 145 (\cite[Theorem~3.7]{M.R.Oboudi3}(5))} \\

\makecell[t]{7 } &
\makecell[t]{ 8 (\cite[Theorem~3.8]{M.R.Oboudi3})} &
\makecell[t]{ 143 (\cite[Theorem~3.8]{M.R.Oboudi3}(4))} \\

\makecell[t]{8 } &
\makecell[t]{ 2 (\cite[Theorem~3.9]{M.R.Oboudi3})} &
\makecell[t]{ 134 (\cite[Theorem~3.9]{M.R.Oboudi3}(2))} \\

\makecell[t]{9 } &
\makecell[t]{ 1 (\cite[Theorem~3.10]{M.R.Oboudi3})} &
\makecell[t]{ 59 (\cite[Theorem~3.5]{M.R.Oboudi3}(2))} \\

\makecell[t]{10} &
\makecell[t]{ 0} &
\makecell[t]{ 26  (\cite[Theorem 3.12]{M.R.Oboudi3})} \\

\makecell[t]{11} &
\makecell[t]{0} &
\makecell[t]{ 5  (\cite[Theorem 3.13]{M.R.Oboudi3})} \\

\makecell[t]{12} &
\makecell[t]{0} &
\makecell[t]{ 1 (\cite[Theorem 3.14]{M.R.Oboudi3})} \\
\bottomrule
\end{tabular}\label{tab-2-1}}
\end{table}
We say that a graph $G\in \mathcal{T}_n$ is \emph{derivable} if there exists $G'\in \mathcal{T}_{n-1}$ such that $G$ is obtained from $G'$ by adding a congruent vertex of type I--IV. For otherwise, we say that $G$ \emph{underivable}. By definition, graphs of $\mathcal{T}_n^0$ are  underivable. In what follows, for $\ast\in\{I, II, III, IV\}$, we write $\mathcal{T}_n^s(\ast)$ to denote the collection of graphs of $\mathcal{T}_n^s$ obtained by adding a congruent vertex of $\ast$-type to a graph of $\mathcal{T}_{n-1}^{s-1}$.

 We proceed with characterization of graphs belonging to $\mathcal{T}_n^1$.

\begin{thm}\cite{F.Duan}\label{thm-3-1}
Let $G$ be a graph of order $n\ge 5$. Then $G\in \mathcal{T}_{n}^1$ if and only if $G$ is isomorphic to one of the following graphs:
\begin{enumerate}[{\rm (i)}]
\item $K_s+K_t+K_1$, $K_s+K_{n-s}- e$ for $e\in E(K_{n-s})$ (where $s,t\ge 2$, $s+t=n-1$) and $H+K_1$ where $H\in \mathcal{T}_{n-1}^0$ is connected;
\item $K_{1, 2}(u)\odot^k K_{n-3}$ and $K_{1, 1}(u)\odot^k (K_{n-2}- e)$ for $e\in E(K_{n-2})$, where $u$ is a vertex of the maximum degree in the first graph;
\item the graphs belonging to $\mathcal{T}_n^1(I)$, $\mathcal{T}_n^1(II)$ or $\mathcal{T}_n^1(III)$;
\item the $802$ individual graphs listed in \cite[Tables 1--6]{F.Duan}.
\end{enumerate}
\end{thm}

We review the graphs of the item (iv) in Table~\ref{tab-3-2}

\begin{table}
	\scriptsize
		\captionsetup{font=scriptsize}
	\caption{\scriptsize{The 802 individual graphs $G_k[K_{n_1}, K_{n_1}, \ldots, K_{n_k}]$ of $\mathcal{T}_n^1$.}}
	\centering
	\begin{tabular}{cccc}
		\toprule
		$k$  & $\#$ graphs & $k$  & $\#$ graphs\\
		\midrule
		\makecell[t]{ 4 } &
		\makecell[t]{ 18 (\cite[Table~1]{F.Duan})} & \makecell[t]{ 9 } & \makecell[t]{ 124 (\cite[Table~5]{F.Duan})} \\
		
		\midrule
		\makecell[t]{ 5 } & \makecell[t]{ 47 (\cite[Table~1]{F.Duan})} & \makecell[t]{ 10 } & \makecell[t]{ 78 (\cite[Table~6]{F.Duan})} \\
		\midrule
		\makecell[t]{ 6 } & \makecell[t]{ 138 (\cite[Table~2]{F.Duan})} & \makecell[t]{ 11 } & \makecell[t]{ 24 (\cite[Table~1]{F.Duan})} \\
		\midrule
		\makecell[t]{ 7 } & \makecell[t]{ 161 (\cite[Table~3]{F.Duan})} & \makecell[t]{ 12 } & \makecell[t]{ 6 (\cite[Table~1]{F.Duan})} \\
		\midrule
		\makecell[t]{ 8 } & \makecell[t]{ 205 (\cite[Table~4]{F.Duan})} & \makecell[t]{ 13 } & \makecell[t]{ 1 (\cite[Table~1]{F.Duan})} \\
		\bottomrule
	\end{tabular}\label{tab-3-2}
\end{table}

In order to rephrase the result of Theorem~\ref{thm-3-1}, we prove that each of  graphs described in the items (i) and (ii) belongs to $\mathcal{T}_n^1(I)$, $\mathcal{T}_n^1(II)$ or $\mathcal{T}_n^1(IV)$.

\begin{thm} We have
\begin{enumerate}[{\rm (i)}]
\item $K_s+K_{n-s}- e, K_{1, 2}(u)\odot^k K_{n-3}\in \mathcal{T}_n^1(I)$, where $e\in E(K_{n-s})$ and $1\le k\le n-3$;
\item $K_{1, 1}(u)\odot^k (K_{n-2}- e)\in \mathcal{T}_n^1(I)$ or $\mathcal{T}_n^1(II)$, where $1\le k\le n-2$;
\item $K_s+K_t+K_1, H+K_1\in \mathcal{T}_n^1(IV)$, where $H\in \mathcal{T}_{n-1}^0$ is connected and $s,t\ge 2$.
\end{enumerate}
\end{thm}
\begin{proof}
(i): Let $H_1\cong K_s+K_{n-s}- e$ where $e=u_1u_2$ is any  edge of $K_{n-s}$. Then $N_{H_1}(u_1)=N_{H_1}(u_2)=V(K_{n-s})\setminus\{u_1, u_2\}$ and so $u_1$ is a congruent vertex of I-type. Since $H_1\in \mathcal{T}_n^1$ (by Theorem \ref{thm-3-1}(i)), we have $H_1- u_1\in \mathcal{T}_{n-1}^0$, and thus $H_1\in \mathcal{T}_n^1(I)$. The graph $H_2\cong K_{1, 2}(u)\odot^k K_{n-3}$  has two pendant vertices $v_1$ and $v_2$ such that $N_{H_2}(v_1)=N_{H_2}(v_2) =\{u\}$. Thus $H_2- v_1$ belongs to $\mathcal{T}_{n-1}^0$  by Theorem~\ref{thm-3-1}(ii), and so $H_2\in \mathcal{T}_n^1(I)$.

(ii): Let $H_3=K_{1, 1}(u)\odot^k (K_{n-2}- e)$, where $e=w_1w_2$ is any edge of $K_{n-2}$, and let $v$ be the vertex of $K_{1, 1}$ other than $u$. If $w_1, w_2\in N_{H_3}(u)$, then $N_{H_3}(w_1)=N_{H_3}(w_2)=V(H_3)\setminus \{w_1, w_2, v\}$. If $w_1, w_2\notin N_{H_3}(u)$, then $N_{H_3}(w_1)= N_{H_3}(w_2)=V(H_3)\setminus \{w_1, w_2,u,v\}$. Thus $w_1$ and $w_2$ are congruent vertices of I-type in the both  cases, and so $H_3- w_1$ belongs to $\mathcal{T}_{n-1}^0$ by Theorem \ref{thm-3-1}(ii). Hence, $H_3\in \mathcal{T}_n^1(I)$. At last, assume that exactly one of $w_1$ and $w_2$, say $w_1$, belongs to $N_{H_3}(u)$. We see that $N_{H_3}(w_1)$ is a disjoint union of $N_{H_3}(w_2)$ and $N_{H_3}(v)$.  Thus $w_1$ is a congruent vertex of II-type and $H_3- w_1$ belongs to $\mathcal{T}_{n-1}^0$. Hence, $H_3\in \mathcal{T}_n^1(II)$.

(iii): It is clear that $K_s+K_t+K_1, H+K_1\in \mathcal{T}_n^1(IV)$ since $K_s+K_t,H\in \mathcal{T}_{n-1}^0$.
\end{proof}

In the light of the last two theorems, graphs of $\mathcal{T}_n^1$ can be partitioned into two classes:  derivable graphs included in the union of $\mathcal{T}_n^1(I)$, $\mathcal{T}_n^1(II)$, $\mathcal{T}_n^1(III)$ and $\mathcal{T}_n^1(IV)$, and the 802 underivable graphs described in Theorem~\ref{thm-3-1}(iv) and reviewed in Table~\ref{tab-3-2}.

\section{Characterization of graphs of $\mathcal{T}_n^s$, for $2\leq s\leq n-3$}\label{sec:4}

\begin{figure}
	\centering
	\includegraphics[width=100mm,angle=0]{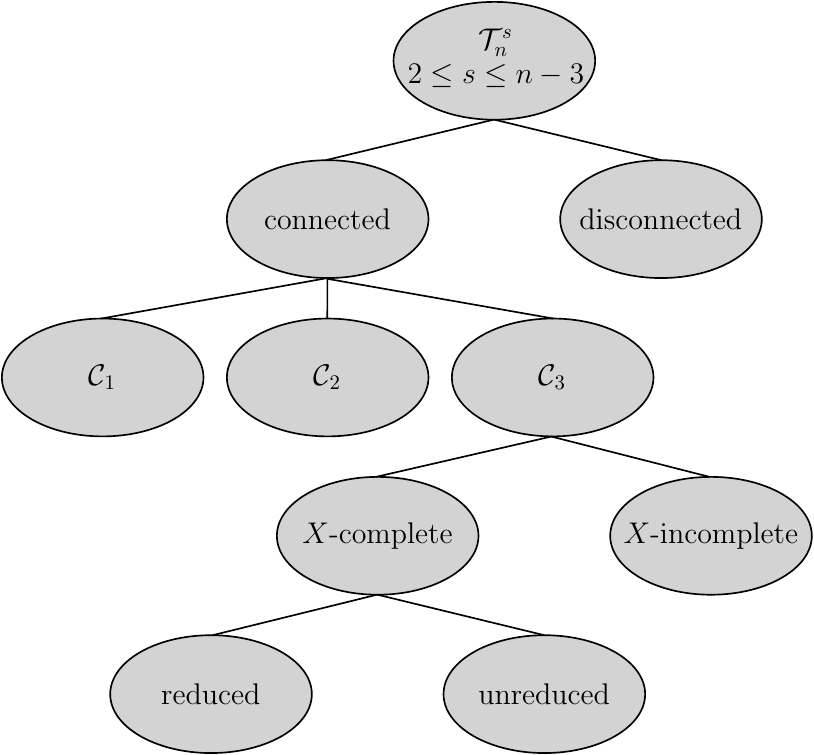}
	\caption{A sketched concept of the proof of Theorem~\ref{thm-1-2}.}\label{fig:concept}
\end{figure}

In this section we prove Theorem~\ref{thm-1-2}. Our considerably long proof is divided into a sequence of lemmas. To make reading the text easier, in Figure~\ref{fig:concept} we give a sketch of the proof. Accordingly, we partition the graphs of $\mathcal{T}_n^s,~2\leq s\leq n-3$, into connected ones and disconnected ones. Connected ones are further partitioned into three classes (named $\mathcal{C}_1-\mathcal{C}_3$ and defined in the corresponding part of this section). The first two classes are considered directly, while the third one is further partitioned into two subclasses, and the first of them is again partitioned in a similar way.

We first single out the case in which $G$ is disconnected, i.e.~we prove  part (a) of Theorem~\ref{thm-1-2}.

\medskip

{\it \bf Proof of Theorem \ref{thm-1-2}(a).} A direct computation shows that all graphs listed in Theorem~\ref{thm-1-2}(a) have exactly two positive eigenvalues.

We proceed with the necessity. Let $G\in \mathcal{T}_n^s, 2\leq s\leq n-3$, be disconnected with components $H_1, H_2,\ldots,H_k$ ($k\geq 2$). Since $p(G)=2$, $G$ has at most two non-trivial components. If there is exactly one non-trivial component, say $H_1$, then $G\cong H_1+(k-1)K_1$. It is clear that $2=p(G)=p(H_1)$ and $s=\eta(G)=\eta(H_1)+(k-1)$. Thus  $H_1\in \mathcal{T}^{s-l}_{n-l}$ where $l=k-1$.

If there are two non-trivial components, say $H_1$ and $H_2$, then $G\cong H_1+H_2+(k-2)K_1$. Therefore, $n=|H_1|+|H_2|+(k-2)$,  $p(H_1)=1=p(H_2)$ and   $s=\eta(G)=\eta(H_1)+\eta(H_2)+k-2$. From Theorem~\ref{thm-2-2} we have $H_1\cong K_{t_1, t_2, \ldots, t_p}$ and $H_2\cong K_{r_1, r_2, \ldots, r_q}$. Note
that $\eta(K_{t_1, t_2, \ldots, t_p})=(t_1-1)+\cdots +(t_p-1)$ and $\eta(K_{r_1, r_2, \ldots, r_q})=(r_1-1)+\cdots +(r_q-1)$. By taking $m=k-2\geq 0$, we obtain $s=(t_1-1)+(t_2-1)+\cdots+(t_p-1)+(r_1-1)+(r_2-1)+\cdots +(r_q-1)+m$, and we are done.\qed

\begin{remark}\label{re-4-1}
If $G\cong H+lK_1$, where $H\in \mathcal{T}^{s-l}_{n-l}$ is connected and $l\geq 1$, then $G$ clearly belongs to $\mathcal{T}_n^s(IV)$.
Let $G\cong K_{t_1, t_2, \ldots, t_p}+K_{r_1, r_2, \ldots, r_q}+mK_1$, where $m\geq 0$. If $m\geq 1$, then $G$ also belongs to $\mathcal{T}_n^s(IV)$. If $m=0$, then at least one of $t_1, t_2, \ldots, t_p, r_1, r_2, \ldots, r_q$ is greater than or equal to~2. Therefore, there exist two non-adjacent vertices $u, v\in V(G)$ such that $N(u)=N(v)$, and so $u$ is a congruent vertex of I-type, which means that in this case we have $G\in \mathcal{T}_n^s(I)$.
\end{remark}

In what follows we assume that $G$ is a connected graph of $\mathcal{T}_n^s, 2\leq s\leq n-3$. Since $s\geq 2$, we have $\lambda_1(G)>\lambda_2(G)>\lambda_3(G)=\lambda_4(G)=0$.
Starting from this point we reserve the symbol $v^*$ to denote a vertex of $G$ with minimum vertex degree, say $d(v^*)=t$, and set $X=N_G(v^*), Y=V(G)\setminus N_G[v^*]$. Clearly, $Y\not=\emptyset$, since for otherwise $G$ would be a complete graph (with $\lambda_2(G)<0$). The following result determines the induced subgraph $G[Y]$.

\begin{lem}\label{lem-4-1}
We have $G[Y]\cong (n-t-1)K_1$ or $G[Y]\cong K_{n_1, n_2, \ldots, n_l}+rK_1$ where $n_1+n_2+\cdots + n_l+r=n-t-1$ and $r\ge 0$.
\end{lem}
\renewcommand\proofname{\it Proof}
\begin{proof}
For every $x\in X$, the induced subgraph $G[\{v^*, x\}\cup Y]$ has a pendant vertex $v^*$ by the choice of $v^*$. By Lemmas~\ref{lem-2-1} and~\ref{lem-2-2}, we have $2=p(G)\geq p(G[\{v^*, x\}\cup Y])=p(G[Y])+1$. Hence, $p(G[Y])\leq 1$.

If $p(G[Y])=0$, then $G[Y]\cong (n-t-1)K_1$. If $p(G[Y])=1$, then by Theorem \ref{thm-2-2} we have $G[Y]\cong K_{n_1, n_2, \ldots, n_l}+ rK_1$,
where $n_1+ n_2+\cdots n_l+r=n-t-1$ and $r\ge 0$. This completes the proof.
\end{proof}

By virtue of Lemma~\ref{lem-4-1}  we can partition connected graphs of $\mathcal{T}_n^s~ (2\leq s\leq n-3)$ into the following three classes determined by $G[Y]$.
\smallskip
\begin{itemize}
\item $\mathcal{C}_1$: $G[Y]\cong (n-t-1)K_1$ or $G[Y]\cong K_{n_1, n_2, \ldots, n_l}+rK_1$,  where $n_1+\cdots +n_l+r=n-t-1$ and $r\ge 1$;
\item $\mathcal{C}_2$:  $G[Y]\cong K_{n_1, n_2, \ldots, n_l}$, where $n_1+ n_2+\cdots +n_l=n-t-1$ and there exists some $i\in \{1,2,\ldots,l\}$ such that $n_i\geq 2$;
\item $\mathcal{C}_3$:  $G[Y]\cong K_{n_1, n_2, \ldots, n_l}$, where $n_1+ n_2+\cdots +n_l=n-t-1$ and $n_1=n_2=\cdots=n_l=1$, that is $G[Y]\cong K_{n-t-1}$, where $n-t-1\geq 2$.
\end{itemize}
\smallskip

We will see that graphs of $\mathcal{C}_1$ and $\mathcal{C}_2$ belong to $\mathcal{T}^s_{n}(I)\cup \mathcal{T}^{s}_{n}(II)\cup \mathcal{T}^{s}_{n}(III)$ and that there is a finite number of graphs of $\mathcal{C}_3$ that do not belong to the same union. We will obtain all of them till the end of this section. Precisely, the class $\mathcal{C}_1$ is considered in the forthcoming Lemma~\ref{lem-4-2}, the class $\mathcal{C}_2$ in Lemma~\ref{lem-4-3} and the class $\mathcal{C}_3$ in Lemmas~\ref{lem-4-4}, \ref{lem-4-5} and~\ref{lem-4-13}.

\begin{lem}\label{lem-4-2}
If $G[Y]\cong (n-t-1)K_1$  or $G[Y]\cong K_{n_1, n_2, \ldots, n_l}+rK_1$ where $r\ge 1$, then $G\in \mathcal{T}_{n}^{s}(I)$.
\end{lem}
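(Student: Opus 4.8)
The plan is to exhibit an explicit pair of congruent vertices of I-type and then peel one of them off. The crucial observation is that the minimum-degree choice of $v^*$ rigidly controls any vertex of $Y$ that is isolated in $G[Y]$. Since we are in situation (a) we have $r\ge 1$, so $G[Y]$ contains at least one isolated vertex; call it $y_0$. Because $y_0\in Y=V(G)-N_G[v^*]$ it is non-adjacent to $v^*$, and because it is isolated in $G[Y]$ it has no neighbour inside $Y$. As $V(G)$ is the disjoint union of $\{v^*\}$, $X$ and $Y$, every neighbour of $y_0$ must lie in $X$, so $N_G(y_0)\subseteq X$ and hence $d_G(y_0)\le |X|=t$.

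Next I would invoke the selection of $v^*$. Since $t=\delta(G)$ is the minimum degree, $d_G(y_0)\ge t$, which forces $d_G(y_0)=t$ and therefore $N_G(y_0)=X=N_G(v^*)$. Thus $y_0$ and $v^*$ are two non-adjacent vertices sharing exactly the same neighbourhood, i.e.\ they form a pair of congruent vertices of I-type in the sense of Section 3. This identity $N_G(y_0)=N_G(v^*)$ is the one place where the minimum-degree hypothesis is genuinely used, and I regard it as the only substantive step; everything after it is bookkeeping.

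The final step is to remove $y_0$ and identify the residual graph. Put $G'=G-y_0$; deleting $y_0$ is precisely a transformation of I-type, so by Remark \ref{re-2} it preserves both inertia indices and decreases the nullity by one, giving $p(G')=p(G)=2$ and $\eta(G')=\eta(G)-1=s-1$. Hence $G'\in\mathcal{G}^{s-1}(n-1)$ (and the range $1\le s-1\le (n-1)-3$ follows at once from $2\le s\le n-3$, so this is consistent). Since $G$ is connected and is recovered from $G'$ by adding back the congruent vertex $y_0$ of I-type, we conclude $G\in\mathcal{G}_1^s(n)$, as required.

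I do not anticipate a serious obstacle: the argument is short once one spots that an isolated vertex of $G[Y]$ cannot attach to anything outside $X$, whereupon minimality of $t$ pins its neighbourhood down to all of $X$. The remaining verification is a direct application of Remark \ref{re-2} together with the definition of $\mathcal{G}_1^s(n)$.
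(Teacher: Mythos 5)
Your proposal is correct and follows essentially the same route as the paper's own proof: pick an isolated vertex $y$ of $G[Y]$, use the minimum-degree choice of $v^*$ to force $N_G(y)=N_G(v^*)$ (making $y$ a congruent vertex of I-type), and then apply Remark \ref{re-2} to conclude $G-y\in\mathcal{G}^{s-1}(n-1)$ and hence $G\in\mathcal{G}_1^{s}(n)$. Your write-up merely spells out the containment $N_G(y)\subseteq X$ and the degree comparison that the paper compresses into one line.
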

\begin{proof}
By the choice of $v^*$, in both cases $G[Y]$ has an isolated vertex, say $y$. We have  $N_G(y)=X=N(v^*)$ since $G$ is connected and  $d(y)\geq t$. Thus $y$ is a congruent vertex (with respect to $v^*$) of I-type. By Lemma \ref{lem-2-4}, we have $p(G)=p(G-y)$ and $\eta(G)=\eta(G-y)+1$. Thus $G-y\in \mathcal{T}^{s-1}_{n-1}$, and so $G\in \mathcal{T}_{n}^{s}(I)$.
\end{proof}

\begin{figure}[t]
	\begin{center}
		\unitlength 1.2mm 
		\linethickness{0.5pt}
		\ifx\plotpoint\undefined\newsavebox{\plotpoint}\fi 
		\setlength{\unitlength}{3.5pt}
		\begin{picture}(117.25,52.125)(0,0)
			\put(5.958,50.092){\line(1,-1){3.889}}
			\put(1.716,46.203){\line(0,-1){4.861}}
			\put(9.936,46.114){\line(0,-1){4.95}}
			\put(6.049,49.928){\circle*{1.226}}
			\put(10.062,40.86){\circle*{1.226}}
			\put(10.062,45.914){\circle*{1.226}}
			\put(1.738,45.766){\circle*{1.226}}
			\put(1.887,40.712){\circle*{1.226}}
			\put(6.049,36.995){\circle*{1.226}}
			\multiput(1.719,40.625)(.040509259,-.033564815){108}{\line(1,0){.040509259}}
			\put(6.125,51.525){\makebox(0,0)[cc]{\scriptsize$v^\ast$}}
			\multiput(5.875,37)(.036956522,.033695652){115}{\line(1,0){.036956522}}
			\multiput(1.625,45.875)(.035714286,.033730159){126}{\line(1,0){.035714286}}
			\put(11.625,40.625){\makebox(0,0)[cc]{\scriptsize$y^\prime$}}
			\put(11.5,46.125){\makebox(0,0)[cc]{\scriptsize$x^\prime$}}
			\put(8.75,36){\makebox(0,0)[cc]{\scriptsize$y^\ast$}}
			\put(.125,45.625){\makebox(0,0)[cc]{\scriptsize$x$}}
			\put(0,40){\makebox(0,0)[cc]{\scriptsize$y$}}
			\put(6,32.75){\makebox(0,0)[cc]{\scriptsize$\lambda_3(C_6)=1$}}
			\put(6.125,29.5){\makebox(0,0)[cc]{\footnotesize$C_6$}}
			\multiput(6.265,20.155)(-.035057851,-.033603306){121}{\line(-1,0){.035057851}}
			\put(6.265,20.067){\line(1,-1){3.889}}
			\put(10.243,16.089){\line(0,-1){4.95}}
			\multiput(2.023,11.228)(.035793388,-.033603306){121}{\line(1,0){.035793388}}
			\multiput(2.111,16.178)(.054078947,-.033730263){152}{\line(1,0){.054078947}}
			\put(6.276,19.824){\circle*{1.226}}
			\put(10.14,15.959){\circle*{1.226}}
			\put(10.289,10.757){\circle*{1.226}}
			\put(6.276,7.338){\circle*{1.226}}
			\put(2.262,15.959){\circle*{1.226}}
			\multiput(6.125,7.375)(.040865385,.033653846){104}{\line(1,0){.040865385}}
			\put(2,16.125){\line(1,0){8.125}}
			\put(2.125,15.875){\line(1,-2){4.25}}
			\put(2.238,10.988){\circle*{1.226}}
			\put(2.25,11){\line(0,1){5.125}}
			\put(7.25,.125){\makebox(0,0)[cc]{\footnotesize$\Gamma_6$}}
			\put(6.125,21.5){\makebox(0,0)[cc]{\scriptsize$v^\ast$}}
			\put(.5,16){\makebox(0,0)[cc]{\scriptsize$x$}}
			\put(.5,11){\makebox(0,0)[cc]{\scriptsize$y$}}
			\put(12,16.375){\makebox(0,0)[cc]{\scriptsize$x^\prime$}}
			\put(12.125,10.625){\makebox(0,0)[cc]{\scriptsize$y^\prime$}}
			\put(8.875,6.5){\makebox(0,0)[cc]{\scriptsize$y^\ast$}}
			\put(6.75,3){\makebox(0,0)[cc]{\scriptsize$\lambda_3(\Gamma_6)=0.1124$}}
			\put(25.833,49.967){\line(1,-1){3.889}}
			\put(21.591,46.078){\line(0,-1){4.861}}
			\put(29.811,45.989){\line(0,-1){4.95}}
			\put(25.924,49.803){\circle*{1.226}}
			\put(29.937,40.735){\circle*{1.226}}
			\put(29.937,45.789){\circle*{1.226}}
			\put(21.613,45.641){\circle*{1.226}}
			\put(21.762,40.587){\circle*{1.226}}
			\put(25.924,36.87){\circle*{1.226}}
			\put(21.469,45.625){\line(1,0){8.375}}
			\multiput(21.594,40.5)(.040509259,-.033564815){108}{\line(1,0){.040509259}}
			\put(26,51.5){\makebox(0,0)[cc]{\scriptsize$v^\ast$}}
			\multiput(25.75,36.875)(.036956522,.033695652){115}{\line(1,0){.036956522}}
			\multiput(21.5,45.75)(.035714286,.033730159){126}{\line(1,0){.035714286}}
			\put(31.5,40.5){\makebox(0,0)[cc]{\scriptsize$y^\prime$}}
			\put(31.375,46){\makebox(0,0)[cc]{\scriptsize$x^\prime$}}
			\put(28.625,35.875){\makebox(0,0)[cc]{\scriptsize$y^\ast$}}
			\put(26.125,32.625){\makebox(0,0)[cc]{\scriptsize$\lambda_3(\Gamma_1)=0.6180$}}
			\put(20,45.5){\makebox(0,0)[cc]{\scriptsize$x$}}
			\put(19.875,39.875){\makebox(0,0)[cc]{\scriptsize$y$}}
			\put(26.25,29.5){\makebox(0,0)[cc]{\footnotesize$\Gamma_1$}}
			\put(46.487,49.568){\line(1,-1){3.889}}
			\put(50.465,45.59){\line(0,-1){4.95}}
			\put(50.376,40.64){\line(-1,-1){3.977}}
			\put(46.549,49.255){\circle*{1.226}}
			\put(50.414,45.39){\circle*{1.226}}
			\put(50.562,40.485){\circle*{1.226}}
			\put(46.549,36.769){\circle*{1.226}}
			\put(42.387,45.688){\circle*{1.226}}
			\put(42.363,40.363){\circle*{1.226}}
			\put(47.375,29.5){\makebox(0,0)[cc]{\footnotesize$\Gamma_2$}}
			\put(46.625,51.125){\makebox(0,0)[cc]{\scriptsize$v^\ast$}}
			\put(52.125,40){\makebox(0,0)[cc]{\scriptsize$y^\prime$}}
			\put(52,45.5){\makebox(0,0)[cc]{\scriptsize$x^\prime$}}
			\put(49.125,35.75){\makebox(0,0)[cc]{\scriptsize$y^\ast$}}
			\put(46.875,32.375){\makebox(0,0)[cc]{\scriptsize$\lambda_3(\Gamma_2)=0.4142$}}
			\multiput(42.25,45.75)(.039351852,.033564815){108}{\line(1,0){.039351852}}
			\put(42.25,45.75){\line(0,-1){5.625}}
			\multiput(42.25,45.75)(.052884615,-.033653846){156}{\line(1,0){.052884615}}
			\multiput(42.125,40.375)(.040509259,-.033564815){108}{\line(1,0){.040509259}}
			\put(40.875,45.75){\makebox(0,0)[cc]{\scriptsize$x$}}
			\put(40.75,40.125){\makebox(0,0)[cc]{\scriptsize$y$}}
			\multiput(67.53,49.541)(-.035057851,-.033603306){121}{\line(-1,0){.035057851}}
			\put(67.53,49.453){\line(1,-1){3.889}}
			\put(63.288,45.564){\line(0,-1){4.861}}
			\put(71.508,45.475){\line(0,-1){4.95}}
			\put(67.573,49.201){\circle*{1.226}}
			\put(67.424,36.863){\circle*{1.226}}
			\put(71.437,40.282){\circle*{1.226}}
			\put(63.113,40.431){\circle*{1.226}}
			\put(63.262,45.187){\circle*{1.226}}
			\put(63.219,45.219){\line(1,0){8.25}}
			\put(71.457,45.082){\circle*{1.226}}
			\multiput(67.469,36.969)(.042525773,.033505155){97}{\line(1,0){.042525773}}
			\put(68.25,29.625){\makebox(0,0)[cc]{\footnotesize$\Gamma_3$}}
			\put(67.75,51.175){\makebox(0,0)[cc]{\scriptsize$v^\ast$}}
			\put(73.625,39.875){\makebox(0,0)[cc]{\scriptsize$y^\prime$}}
			\put(73.5,45.375){\makebox(0,0)[cc]{\scriptsize$x^\prime$}}
			\put(69.625,35.75){\makebox(0,0)[cc]{\scriptsize$y^\ast$}}
			\put(68,32.75){\makebox(0,0)[cc]{\scriptsize$\lambda_3(\Gamma_3)=0.1830$}}
			\multiput(63.125,40.5)(.0390625,-.033482143){112}{\line(1,0){.0390625}}
			\put(61.375,45.125){\makebox(0,0)[cc]{\scriptsize$x$}}
			\put(61.375,40.125){\makebox(0,0)[cc]{\scriptsize$y$}}
			\put(91.25,29.5){\makebox(0,0)[cc]{\footnotesize$\Gamma_4$}}
			\put(89.75,32.5){\makebox(0,0)[cc]{\scriptsize$\lambda_3(\Gamma_4)=0.5293$}}
			\multiput(89.28,49.666)(-.035057851,-.033603306){121}{\line(-1,0){.035057851}}
			\put(89.28,49.578){\line(1,-1){3.889}}
			\put(85.038,45.689){\line(0,-1){4.861}}
			\put(93.258,45.6){\line(0,-1){4.95}}
			\put(89.323,49.326){\circle*{1.226}}
			\put(89.174,36.988){\circle*{1.226}}
			\put(93.187,40.407){\circle*{1.226}}
			\put(84.863,40.556){\circle*{1.226}}
			\put(85.012,45.312){\circle*{1.226}}
			\put(84.969,45.344){\line(1,0){8.25}}
			\put(93.207,45.207){\circle*{1.226}}
			\multiput(84.844,45.344)(.033653846,-.064423077){130}{\line(0,-1){.064423077}}
			\multiput(89.219,37.094)(.042525773,.033505155){97}{\line(1,0){.042525773}}
			\put(89.5,51.5){\makebox(0,0)[cc]{\scriptsize$v^\ast$}}
			\put(95.375,40){\makebox(0,0)[cc]{\scriptsize$y^\prime$}}
			\put(95.25,45.5){\makebox(0,0)[cc]{\scriptsize$x^\prime$}}
			\put(91.375,35.875){\makebox(0,0)[cc]{\scriptsize$y^\ast$}}
			\multiput(84.875,40.625)(.0390625,-.033482143){112}{\line(1,0){.0390625}}
			\put(83.125,45.25){\makebox(0,0)[cc]{\scriptsize$x$}}
			\put(83.125,40.25){\makebox(0,0)[cc]{\scriptsize$y$}}
			\multiput(110.996,49.578)(-.035057851,-.033603306){121}{\line(-1,0){.035057851}}
			\put(110.996,49.49){\line(1,-1){3.889}}
			\put(114.974,45.512){\line(0,-1){4.95}}
			\put(114.885,40.562){\line(-1,-1){3.977}}
			\multiput(106.754,40.651)(.035793388,-.033603306){121}{\line(1,0){.035793388}}
			\put(111.35,49.401){\line(-1,0){.088}}
			\put(111.022,49.307){\circle*{1.226}}
			\put(115.035,40.239){\circle*{1.226}}
			\put(115.184,45.591){\circle*{1.226}}
			\put(107.008,45.442){\circle*{1.226}}
			\put(111.17,36.672){\circle*{1.226}}
			\put(106.875,45.5){\line(1,0){.25}}
			\put(106.988,40.238){\circle*{1.226}}
			\put(106.875,45.5){\line(0,-1){5.375}}
			\multiput(106.875,45.375)(.054276316,-.033717105){152}{\line(1,0){.054276316}}
			\multiput(111,36.625)(.033536585,.075203252){123}{\line(0,1){.075203252}}
			\put(112.125,29.5){\makebox(0,0)[cc]{\footnotesize$\Gamma_5$}}
			\put(111.125,51){\makebox(0,0)[cc]{\scriptsize$v^\ast$}}
			\put(105.25,45.375){\makebox(0,0)[cc]{\scriptsize$x$}}
			\put(105.25,40.375){\makebox(0,0)[cc]{\scriptsize$y$}}
			\put(117.125,45.75){\makebox(0,0)[cc]{\scriptsize$x^\prime$}}
			\put(117.25,40){\makebox(0,0)[cc]{\scriptsize$y^\prime$}}
			\put(114,35.5){\makebox(0,0)[cc]{\scriptsize$y^\ast$}}
			\put(111.75,32.625){\makebox(0,0)[cc]{\scriptsize$\lambda_3(\Gamma_5)=0.6180$}}
			\multiput(25.958,20.43)(-.035057851,-.033603306){121}{\line(-1,0){.035057851}}
			\put(25.958,20.342){\line(1,-1){3.889}}
			\put(21.716,16.453){\line(0,-1){4.861}}
			\put(29.936,16.364){\line(0,-1){4.95}}
			\put(26.049,20.178){\circle*{1.226}}
			\put(30.062,11.11){\circle*{1.226}}
			\put(30.062,16.164){\circle*{1.226}}
			\put(21.738,16.016){\circle*{1.226}}
			\put(21.887,10.962){\circle*{1.226}}
			\put(26.049,7.245){\circle*{1.226}}
			\put(21.594,16){\line(1,0){8.375}}
			\multiput(21.719,16)(.033653846,-.069230769){130}{\line(0,-1){.069230769}}
			\multiput(25.844,7.375)(.033730159,.070436508){126}{\line(0,1){.070436508}}
			\multiput(25.969,7.125)(.035869565,.033695652){115}{\line(1,0){.035869565}}
			\multiput(21.719,10.875)(.040509259,-.033564815){108}{\line(1,0){.040509259}}
			\put(25.5,21.75){\makebox(0,0)[cc]{\scriptsize$v^\ast$}}
			\put(20,15.625){\makebox(0,0)[cc]{\scriptsize$x$}}
			\put(20,10.625){\makebox(0,0)[cc]{\scriptsize$y$}}
			\put(31.75,16.625){\makebox(0,0)[cc]{\scriptsize$x^\prime$}}
			\put(31.875,10.875){\makebox(0,0)[cc]{\scriptsize$y^\prime$}}
			\put(28.25,5.875){\makebox(0,0)[cc]{\scriptsize$y^\ast$}}
			\put(26.375,3.125){\makebox(0,0)[cc]{\scriptsize$\lambda_3(\Gamma_7)=0.6180$}}
			\put(49.25,12.5){\makebox(0,0)[cc]{}}
			\put(46.601,20.216){\line(1,-1){3.889}}
			\put(50.579,16.238){\line(0,-1){4.95}}
			\multiput(42.359,11.377)(.035793388,-.033603306){121}{\line(1,0){.035793388}}
			\multiput(42.447,16.327)(.054078947,-.033730263){152}{\line(1,0){.054078947}}
			\put(46.955,20.127){\line(-1,0){.088}}
			\put(42.977,16.68){\line(1,0){.088}}
			\multiput(42.889,16.327)(-.088375,-.03325){8}{\line(-1,0){.088375}}
			\put(46.478,19.945){\circle*{1.226}}
			\put(50.64,15.783){\circle*{1.226}}
			\put(50.64,10.877){\circle*{1.226}}
			\put(42.613,15.783){\circle*{1.226}}
			\put(42.762,11.026){\circle*{1.226}}
			\put(46.627,7.458){\circle*{1.226}}
			\multiput(42.675,15.875)(.033695652,.038043478){115}{\line(0,1){.038043478}}
			\multiput(46.55,7.5)(.033730159,.066468254){126}{\line(0,1){.066468254}}
			\multiput(42.675,15.875)(.033613445,-.070378151){119}{\line(0,-1){.070378151}}
			\put(42.55,15.875){\line(0,-1){4.625}}
			\put(46.625,21.5){\makebox(0,0)[cc]{\scriptsize$v^\ast$}}
			\put(52.125,15.625){\makebox(0,0)[cc]{\scriptsize$x^\prime$}}
			\put(52.25,9.875){\makebox(0,0)[cc]{\scriptsize$y^\prime$}}
			\multiput(46.5,7.5)(.040841584,.033415842){101}{\line(1,0){.040841584}}
			\put(40.75,16){\makebox(0,0)[cc]{\scriptsize$x$}}
			\put(40.625,10.375){\makebox(0,0)[cc]{\scriptsize$y$}}
			\put(48.5,6.125){\makebox(0,0)[cc]{\scriptsize$y^\ast$}}
			\put(46.75,3.25){\makebox(0,0)[cc]{\scriptsize$\lambda_3(\Gamma_8)=0.2798$}}
			\multiput(62.567,16.349)(.054078947,-.033730263){152}{\line(1,0){.054078947}}
			\put(62.499,16.138){\line(1,0){8.375}}
			\put(67.346,20.238){\line(1,-1){3.889}}
			\put(71.324,16.26){\line(0,-1){4.95}}
			\put(71.235,11.31){\line(-1,-1){3.977}}
			\multiput(63.104,11.399)(.035793388,-.033603306){121}{\line(1,0){.035793388}}
			\multiput(63.28,16.172)(.033603306,-.07377686){121}{\line(0,-1){.07377686}}
			\put(67.7,20.149){\line(-1,0){.088}}
			\put(63.722,16.702){\line(1,0){.088}}
			\multiput(63.811,16.702)(-.0295,-.058833){6}{\line(0,-1){.058833}}
			\multiput(63.634,16.349)(-.088375,-.03325){8}{\line(-1,0){.088375}}
			\put(67.525,20.042){\circle*{1.226}}
			\put(71.539,16.028){\circle*{1.226}}
			\put(71.539,11.272){\circle*{1.226}}
			\put(67.377,7.407){\circle*{1.226}}
			\put(63.363,16.028){\circle*{1.226}}
			\put(63.363,11.272){\circle*{1.226}}
			\put(63.374,16.388){\line(0,-1){.25}}
			\put(63.25,16.25){\line(0,-1){5.125}}
			\put(67.625,21.45){\makebox(0,0)[cc]{\scriptsize$v^\ast$}}
			\multiput(67.25,7.25)(.033730159,.070436508){126}{\line(0,1){.070436508}}
			\put(61.625,16.375){\makebox(0,0)[cc]{\scriptsize$x$}}
			\put(61.5,10.75){\makebox(0,0)[cc]{\scriptsize$y$}}
			\multiput(63.125,16.125)(.036585366,.033536585){123}{\line(1,0){.036585366}}
			\put(73,16.125){\makebox(0,0)[cc]{\scriptsize$x^\prime$}}
			\put(73,11.125){\makebox(0,0)[cc]{\scriptsize$y^\prime$}}
			\put(69.625,6.125){\makebox(0,0)[cc]{\scriptsize$y^\ast$}}
			\put(67.5,3.5){\makebox(0,0)[cc]{\scriptsize$\lambda_3(\Gamma_9)=0.1589$}}
			\put(26,.125){\makebox(0,0)[cc]{\footnotesize$\Gamma_7$}}
			\put(46.375,0){\makebox(0,0)[cc]{\footnotesize$\Gamma_8$}}
			\put(67.625,0){\makebox(0,0)[cc]{\footnotesize$\Gamma_9$}}
			\multiput(63.25,45.125)(.057624113,-.033687943){141}{\line(1,0){.057624113}}
		\end{picture}
	\end{center}
	\caption{Graphs $C_6$, $\Gamma_1$--$\Gamma_{9}$ (for the proof of Lemma~\ref{lem-4-3}).}\label{fig-3}
\end{figure}

\begin{lem}\label{lem-4-3}
Suppose that $G[Y]\cong K_{n_1, n_2, \ldots, n_l}$, where for at least one $i\in \{1,2,\ldots,l\}$ we have $n_i\geq 2$. Let $y, y'$ be two non-adjacent vertices in $G[Y]$. Then $G\in \mathcal{T}_{n}^s(I)$ if $N_X(y)=N_X(y')$ and $G\in \mathcal{T}_{n}^s(II)$, otherwise.
\end{lem}
\begin{proof}
Since $y$ and $y'$ are non-adjacent  and $G[Y]\cong K_{n_1, n_2, \ldots, n_l}$, we have $N_Y(y)=N_Y(y')$. This means  that $N_G(y)=N_G(y')$ if and only if $N_X(y)=N_X(y')$. We distinguish the following two cases.

{\flushleft\it Case 1:} $N_X(y)=N_X(y')$. Here we have $N_G(y)=N_G(y')$, and  so $y$ is a congruent vertex (with respect to $y'$) of I-type. By Lemma \ref{lem-2-4}, we have $p(G)=p(G-y)$ and $\eta(G)=\eta(G-y)+1$. Thus, $G-y\in \mathcal{T}^{s-1}_{n-1}$, and so $G\in \mathcal{T}_{n}^{s}(I)$.

{\flushleft\it Case 2:} $N_X(y)\not=N_X(y')$. Here, one of $N_X(y')\setminus N_X(y)$ and $N_X(y)\setminus N_X(y')$ is non-empty. Without loss of generality, assume that  $x'\in N_X(y')\setminus N_X(y)$, i.e.~$x'\sim y'$ and $x'\not\sim y$. Since $l\geq 2$, there exists $y^*\in Y$ such that $y\sim y^*\sim y'$. If $N_X(y)\neq \emptyset$, then we may take $x\in N_X(y)$ such that the vertices $v^*,x,y,y^\ast, y',x'$ form the cycle $C_6$ in $G$. Up to isomorphism, $G[C_6]$ is one of the 10 induced subgraphs illustrated in Figure~\ref{fig-3}, i.e.
$$G[C_6]\cong\left\{
                                           \begin{array}{ll}
                                             C_6  & ~\hbox{(no additional edges);}\\
                                             \Gamma_1\  \hbox{or}\  \Gamma_2  & ~\hbox{(1 additional edge);} \\
                                             \Gamma_3, \Gamma_4\  \hbox{or}\  \Gamma_5 &~ \hbox{(2 additional edges);} \\
                                             \Gamma_6,\Gamma_7\  \hbox{or}\  \Gamma_8 & ~\hbox{(3 additional edges);} \\
                                             \Gamma_9 & ~\hbox{(4 additional edges).}
                                           \end{array}
                                         \right.
$$
However, $C_6$ and $\Gamma_1$--$\Gamma_9$ are forbidden subgraphs for $G$ by the eigenvalue interlacing. Hence $N_X(y)=\emptyset$, and so $yy^\ast$ is a pendant edge of  $G[X\cup\{y,y^\ast, y', v^*\}]$. By Lemmas~\ref{lem-2-1} and~\ref{lem-2-2}, we have
$$2=p(G)\geq p(G[X\cup \{y, y^\ast,y', v^*\}])=p(G[X\cup \{y', v^*\}])+1. $$
Thus $p(G[X\cup \{y', v^*\}])\leq 1$. Since $G[X\cup \{y', v^*\}]$ is connected, by taking into account  Theorem~\ref{thm-2-2} we conclude that $G[X\cup \{y',
v^*\}]$ is complete multipartite. Consequently, we have $N_X(y')=N_G(v^\ast)=X$ because $v^\ast\nsim y'$. Since $N_Y(y')=N_Y(y)=N_G(y)$, we get that $N_G(y')=N_X(y')\cup N_Y(y')=N_G(v^*)\cup N_G(y)$ is a disjoint union which means that $\{y', v^*, y\}$ is an independent set in $G$, which further means that $y'$ is a congruent vertex (with respect to $v^*$ and $y$) of II-type. Thus $p(G)=p(G-y')$ and $\eta(G)=\eta(G-y')+1$ by Lemma \ref{lem-2-4}, which yields $G-y'\in \mathcal{T}^{s-1}_{n-1}$, and so $G\in \mathcal{T}_{n}^{s}(II)$.
\end{proof}

It remains to consider the class $\mathcal{C}_3$. We partition it into  subclasses determined as follows: $G\in \mathcal{C}_3$ is called $X$-\emph{complete} if $G[X]$ is complete, while for otherwise $G$ is said to be $X$-\emph{incomplete}. We first consider the latter subclass.

\begin{lem}\label{lem-4-4}
Let $G$ be an $X$-incomplete graph of $\mathcal{C}_3$. Then $G\in \mathcal{T}_n^s(I)$ if there exist two non-adjacent vertices $x_1, x_2$ in $G[X]$ such that $N_Y(x_1)=N_Y(x_2)$ and $G\in \mathcal{T}_n^s(III)$, otherwise.
\end{lem}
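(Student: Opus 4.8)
The plan is to exploit that $v^*$ has minimum degree and that $p(G)=2$ forbids every induced subgraph with three positive eigenvalues (Lemma~\ref{lem-2-1}). I would first record two preliminary facts valid for any two non-adjacent $x,x'\in X$. Since $N_X(x)\subseteq X\setminus\{x,x'\}$ has at most $t-2$ elements while $d(x)\ge\delta(G)=t$ and $x\sim v^*$, the identity $d(x)=1+|N_X(x)|+|N_Y(x)|$ forces $N_Y(x)\neq\emptyset$, and likewise for $x'$. Moreover, if $N_Y(x)\cap N_Y(x')=\emptyset$, then choosing $y\in N_Y(x)$ and $y'\in N_Y(x')$ (adjacent, since $G[Y]$ is complete) produces the induced cycle $v^*\,x\,y\,y'\,x'$, i.e.\ a $C_5$; as $p(C_5)=3$ this is excluded. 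Hence non-adjacent vertices of $X$ always share a neighbour in $Y$.

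For the first alternative, assume $x_1\nsim x_2$ with $N_Y(x_1)=N_Y(x_2)=:W$. Because both are adjacent to $v^*$ and have the same $Y$-neighbours, proving $N_G(x_1)=N_G(x_2)$ reduces to proving $N_X(x_1)=N_X(x_2)$. I would argue by contradiction: if some $x_3\in X$ is adjacent to exactly one of them, then $N_X(x_2)\subseteq X\setminus\{x_1,x_2,x_3\}$, so the degree bound sharpens to $|W|\ge 2$ and I may pick two common neighbours $w,w'\in W$. A short congruence computation shows that, no matter how $x_3$ attaches to $w$ and $w'$, the induced subgraph on $\{v^*,x_1,x_2,x_3,w,w'\}$ has $p=3$, contradicting $p(G)=2$. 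Therefore $x_1,x_2$ are congruent vertices of I-type; deleting $x_1$ and invoking Remark~\ref{re-2} gives $p(G-x_1)=2$ and $\eta(G-x_1)=s-1$, so $G-x_1\in\mathcal{G}^{s-1}(n-1)$ and $G\in\mathcal{G}_1^s(n)$.

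For the second alternative I first establish a nesting property: the non-adjacent pair now has $N_Y(x_1)\neq N_Y(x_2)$, and I claim one of these sets contains the other. Otherwise there are $y_1\in N_Y(x_1)\setminus N_Y(x_2)$, $y_2\in N_Y(x_2)\setminus N_Y(x_1)$ and (by the preliminary fact) a common neighbour $w$, and the induced subgraph on $\{v^*,x_1,x_2,y_1,y_2,w\}$ again has $p=3$, a contradiction. So, say, $N_Y(x_2)\subsetneq N_Y(x_1)$, which together with the completeness of $G[Y]$ yields the induced quadrangle $x_1\,v^*\,x_2\,w$ for any common neighbour $w$. The remaining task is to show that this configuration, once all competing adjacency patterns among $\{v^*,x_1,x_2\}$, $y_1$ and the vertices of $Y$ are discarded via the forbidden subgraphs $\Gamma_{10}$--$\Gamma_{13}$ of Fig.~\ref{fig-3} (each having $\lambda_3>0$), forces one quadrangle vertex to satisfy the two congruent-edge identities $N(u)\setminus\{y,v\}=N(v)\setminus\{u,x\}$ and $N(x)\setminus\{v,y\}=N(y)\setminus\{x,u\}$. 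This makes the quadrangle congruent and the chosen vertex a congruent vertex of III-type; deleting it and applying Remark~\ref{re-2} lands in $\mathcal{G}^{s-1}(n-1)$, whence $G\in\mathcal{G}_3^s(n)$.

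The main obstacle is this last step: the $C_5$-freeness and the $6$-vertex exclusions alone do not pin down the quadrangle, so one must run through the finitely many ways $v^*,x_1,x_2,y_1,w$ (and a second $Y$-vertex) can be joined, eliminating each pattern that contains one of $\Gamma_{10}$--$\Gamma_{13}$ until only the congruent pattern remains. Verifying the two external-neighbourhood identities then rests on the observation that any two vertices of the complete graph $G[Y]$ have identical neighbourhoods inside $Y$, so those equalities reduce to statements about the sets $N_X(\cdot)$, which the forbidden-subgraph analysis has already fixed.
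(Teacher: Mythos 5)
Your first alternative is correct and takes a genuinely different route from the paper. Where the paper reaches $N_X(x_1)=N_X(x_2)$ through its two structural claims plus the single forbidden graph $\Gamma_{11}$, you use the minimum-degree bound to force $|W|\ge 2$ and then exclude all three attachment patterns of $x_3$ to $\{w,w'\}$ at once by a six-vertex spectral computation. I verified this: the pattern with both edges present is the paper's $\Gamma_{11}$, and the two remaining patterns (no edge, one edge) also have exactly three positive eigenvalues, so your uniform exclusion is legitimate, and arguably more self-contained than the paper's derivation, provided you actually exhibit those three spectra rather than just asserting ``a short congruence computation shows''.

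The genuine gap is in the second alternative, and it is twofold. First, nesting alone does not make the quadrangle congruent. If you only know $N_Y(x_2)\subsetneq N_Y(x_1)$, then whenever $|N_Y(x_2)|\ge 2$ any vertex of $N_Y(x_2)\setminus\{w\}$ is adjacent to $x_2$ and to $x_1$ but not to $v^*$, and whenever $N_Y(x_1)\neq Y$ any vertex of $Y\setminus N_Y(x_1)$ is adjacent to $w$ but not to $x_1$; either situation violates the congruent-edge identities for both possible pairings of independent edges of $x_1v^*x_2w$. So before any quadrangle can be declared congruent you must prove the paper's Claim 1 refinement that $N_Y(x_2)$ is a singleton and $N_Y(x_1)=Y$, which uses $\Gamma_{10}$ \emph{and} $\Gamma_4$ (the latter is absent from your list). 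Second, and more fundamentally, your proposed completion --- enumerating ``the finitely many ways $v^*,x_1,x_2,y_1,w$ (and a second $Y$-vertex) can be joined'' --- cannot close the argument even in principle, because congruence of the quadrangle is a statement about the neighborhoods of its four vertices in all of $G$: one must show that \emph{every} $x^*\in X\setminus\{x_1,x_2\}$ is adjacent to $x_2$ (this follows from $d(x_2)\ge d(v^*)=t$, a counting fact, not a forbidden-subgraph fact) and satisfies $x^*\sim x_1\Leftrightarrow x^*\sim w$ (this is the paper's Claim 2, proved by excluding $\Gamma_{12}$ and $\Gamma_{13}$ on configurations containing a \emph{third} vertex of $X$). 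Your enumeration never introduces a third $X$-vertex, so the sets $N_X(x_1)$, $N_X(x_2)$ are not ``already fixed'' by it, and the external-neighborhood identities you need remain unproved. This quantified-over-all-of-$X$ step is exactly what your sketch omits and what the paper's Claim 2 supplies.
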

\begin{proof}
We know that $G[X]$ is not complete and $G[Y]$ is complete with $|Y|=n-t-1\geq 2$. Let $x, x'$ be non-adjacent vertices in $G[X]$. Since $d_G(x),d_G(x')\ge d_G(v^*)$, we have $|N_Y(x)|\ge1$ and $|N_Y(x')|\ge1$. We prove the following two claims.

\smallskip

\noindent{\textbf{Claim~1.}} \textit{If $x\not\sim x'$ in $G[X]$, then one of $N_{Y}(x), N_{Y}(x')$ includes the other. Moreover, if $N_{Y}(x)\subset N_{Y}(x')$, then $|N_Y(x)|=1$ and $N_Y(x')=Y$.}

\smallskip

\noindent{\textit{Proof of Claim~1}.}
Assume by way of contradiction that $y\in N_Y(x)\setminus N_Y(x')$ and $y'\in N_Y(x')\setminus N_Y(x)$. Then $G[v^*, x, y, y',x']$ $\cong C_5$, which
 leads to $\lambda_3(G)\ge \lambda_3(C_5)=0.6180$, a contradiction.

 Assume now that $N_{Y}(x)\subset N_{Y}(x')$. There exists $y^\ast\in N_{Y}(x')\setminus N_{Y}(x)$. If $|N_Y(x)|\geq 2$, say $\{y, y'\}\subseteq N_Y(x)$, then $x'\sim y, y'$. Thus, $G[v^*,x,y,y',y^\ast,x']\cong \Gamma_{10}$ (see Figure~\ref{fig-4}), but $\lambda_3(\Gamma_{10})=0.1505$, a contradiction. Hence, $|N_Y(x)|=1$ and we may assume that $N_Y(x)=\{y\}$. If $N_Y(x')\neq Y$, then there exists $y'\in Y\setminus N_Y(x')$. Recall that $y^\ast\in N_{Y}(x')\setminus N_{Y}(x)$, and so we have $G[v^*,x,y,y',y^\ast,x']\cong \Gamma_{11}$ (see Figure~\ref{fig-4}), but $\lambda_3(\Gamma_{11})=0.1830$, a contradiction. Thus $N_Y(x')=Y$. This proves the claim.

\smallskip

\noindent{\textbf{Claim~2.}}\textit{
Let $x\not\sim x'$ in $G[X]$. The following holds true.
\begin{enumerate}[{\rm (a)}]
\item If $N_{Y}(x)=\{y\}$, then $N_{X}(x)=X\setminus\{x, x'\}$;
\item If $\{y\}=N_{Y}(x)\subset N_{Y}(x')=Y$, then $N_X(x')=N_X(y)\setminus \{x, x'\}$.
\end{enumerate}}
\smallskip

\noindent{\textit{Proof of Claim~2}.}
Since $d_G(x)\ge |X|=t$, $x\not\sim x'$ and $N_Y(x)=\{y\}$, we see that $x$ is also a vertex with minimum degree. Hence, $N_X(x)=X\setminus\{x,x'\}$ and (a) follows.

Assume that $x^*\in N_X(x')$. We have $x^*\sim x'$ and, from (a),  $x\sim x^*$. Since $|Y|\ge 2$,  apart from $y$, there exists $ y'\in N_Y(x')$. Recall that $x$ has only one neighbour $y$ in $Y$ and $y\sim y'$. Now, if $x^*\notin N_X(y)$, then
$$G[v^*,x,y,y',x',x^*]\cong \left\{\begin{array}{ll}\Gamma_{12}& \mbox{whenever $x^*\nsim y'$ (see Figure \ref{fig-4}); }\\
 \Gamma_{13} & \mbox{whenever $x^*\sim y'$ (see Figure \ref{fig-4}).}
\end{array}\right.
$$
But $\Gamma_{12}$ and $\Gamma_{13}$ are forbidden subgraphs for $G$, a contradiction. Thus $x^*\in N_X(y)$, which yields $x^*\in N_X(y)\setminus \{x, x'\}$. In other words, $N_X(x')\subseteq N_X(y)\setminus \{x, x'\}$. To prove that these sets are equal, we assume that $x^{**}\in N_X(y)\setminus \{x, x'\}$. From (a) we have $x^{**}\sim y,x,v^*$. Now, if $x^{**}\notin N_X(x')$, then  $$G[v^*,x,y,y',x',x^{**}]\cong \left\{\begin{array}{ll}\Gamma_{14}& \mbox{ whenever $x^{**}\nsim y'$ (see Figure \ref{fig-4});}\\
 \Gamma_{15}& \mbox{ whenever $x^{**}\sim y'$ (see Figure \ref{fig-4}).}
\end{array}\right.
$$

But $\Gamma_{14}$ and $\Gamma_{15}$ are forbidden for $G$, a contradiction. Hence $x^{**}\in N_X(x')$, and so $N_X(x')=N_X(y)\setminus \{x,x'\}$. This completes the proof of Claim 2.

\smallskip

We now distinguish the following cases.

{\flushleft\it Case 1:} There exists a pair of non-adjacent vertices $x_1,x_2\in X$ such that $N_Y(x_1)=N_Y(x_2)$.

We first show that in this case there must be $N_X(x_1)=N_X(x_2)$. If $N_Y(x_1)=N_Y(x_2)=\{y\}$ for $y\in Y$, then $N_X(x_1)=N_X(x_2)=X\setminus\{x_1, x_2\}$ by Claim 2(a), as desired.

Now suppose that $\{y, y'\}\subseteq N_Y(x_1)=N_Y(x_2)$. By way of contradiction, assume that $x^\ast\in N_X(x_2)\setminus N_X(x_1)$, i.e.~$x^\ast\sim x_2$ and $x^\ast\not\sim x_1$. By Claim 1, we have $N_Y(x^\ast)\subseteq N_Y(x_1)$ (since for otherwise we would have $N_Y(x_1)\subset N_Y(x^\ast)$, and then $|N_Y(x_1)|=1$, a contradiction). If $N_Y(x^\ast)$ is a proper subset of $N_Y(x_1)$, then  Claim~1 yields  $N_Y(x_1)=Y$ and $|N_Y(x^\ast)|=1$. Accordingly, let $N_Y(x^\ast)=\{y^*\}$. Then we have $N_X(x_1)=N_X(y^*)\setminus \{x^*, x_1\}$ by Claim 2(b). On the other hand, from $y^*\in N_Y(x^\ast) \subset N_Y(x_1)=N_Y(x_2)$, we have $y^*\sim x_2$, and so $x_2\in N_X(y^*)\setminus \{x^*, x_1\}=N_X(x_1)$ which gives $x_2\sim x_1$, a contradiction. If $N_Y(x^\ast)=N_Y(x_1)$, then $x^\ast\sim y, y'$, and so $G[v^*,x_1,y,y',x^\ast,x_2]\cong \Gamma_{16}$ (see Figure~\ref{fig-4}), which is a contradiction as before.

Hence, $N_{X}(x_1)= N_{X}(x_2)$ and thus  $N_G(x_1)=N_G(x_2)$ by the assumption stated in this case. Thus $x_1$ is a congruent vertex (with respect to $x_2$) of I-type. By Lemma~\ref{lem-2-4}, $p(G)=p(G-x_1)$ and $\eta(G)=\eta(G-x_1)+1$. Thus $G-x_1\in \mathcal{T}^{s-1}_{n-1}$, which  implies that $G\in \mathcal{T}_n^s(I)$.

{\flushleft\it Case 2:}
For every pair of non-adjacent vertices $x,x'$ in $G[X]$, we have $N_Y(x)\not=N_Y(x')$.

By Claim 1, we may assume that $N_Y(x)\subset N_Y(x')=Y$, and then we have $|N_Y(x)|=1$, say $N_Y(x)=\{y\}$. Thus $y\sim x, x'$ and $xv^\ast x'y$ induce the quadrangle $C_4$. We need to verify that $C_4$ is congruent. By Claim~2(a), we have $N_G(x)\setminus \{v^\ast, y\}=X\setminus\{x', x\}=N_G(v^\ast)\setminus \{x', x\}$. By Claim~2(b), we have $N_G(x')\setminus\{y, v^\ast\}=N_X(x')\cup (Y\setminus \{y\})=N_G(y)\setminus \{x,x'\}$. Altogether, the quadrangle $C_4=xv^\ast x'y$ is congruent, where $xv^\ast$ and $x'y$ act as congruent edges. This implies that $x, v^\ast, x', y$ are congruent vertices of III-type. By Lemma \ref{lem-2-4}, we have $p(G)=p(G-x)$ and $\eta(G)=\eta(G-x)+1$. Therefore $G-x\in \mathcal{T}^{s-1}_{n-1}$, which implies that $G\in \mathcal{T}_n^s(III)$. The proof is complete.\end{proof}

\begin{figure}[t]
\begin{center}
\unitlength 1.2mm 
\linethickness{0.5pt}
\ifx\plotpoint\undefined\newsavebox{\plotpoint}\fi 
\setlength{\unitlength}{3.5pt}
\begin{picture}(105,50.5)(0,0)
\put(14.25,44.375){\makebox(0,0)[cc]{}}
\multiput(6.794,49.461)(-.035057851,-.033603306){121}{\line(-1,0){.035057851}}
\multiput(2.552,40.534)(.035793388,-.033603306){121}{\line(1,0){.035793388}}
\put(2.728,45.396){\line(0,-1){4.685}}
\put(10.772,45.485){\line(0,-1){4.861}}
\put(6.752,49.086){\circle*{1.226}}
\put(10.616,45.072){\circle*{1.226}}
\put(10.616,40.167){\circle*{1.226}}
\put(6.9,36.451){\circle*{1.226}}
\put(2.738,40.167){\circle*{1.226}}
\put(2.738,45.072){\circle*{1.226}}
\multiput(6.637,49.141)(.034782609,-.033695652){115}{\line(1,0){.034782609}}
\put(8.25,28){\makebox(0,0)[cc]{\footnotesize$\Gamma_{10}$}}
\put(3.875,31.625){\makebox(0,0)[cc]{\tiny$\lambda_3(\Gamma_{10})=0.1505$}}
\put(6.25,50.375){\makebox(0,0)[cc]{\scriptsize$v^\ast$}}
\put(2.625,40.375){\line(1,0){8}}
\multiput(2.5,45.375)(.03358209,-.066231343){134}{\line(0,-1){.066231343}}
\put(10.625,45.5){\line(-3,-2){8.25}}
\multiput(10.75,45.125)(-.033695652,-.075){115}{\line(0,-1){.075}}
\put(1.125,40.625){\makebox(0,0)[cc]{\scriptsize$y$}}
\put(1.125,45.25){\makebox(0,0)[cc]{\scriptsize$x$}}
\put(12.25,41.5){\makebox(0,0)[cc]{\scriptsize$y^\ast$}}
\put(9,35.75){\makebox(0,0)[cc]{\scriptsize$y^\prime$}}
\multiput(6.875,36.625)(.03587963,.033564815){108}{\line(1,0){.03587963}}
\put(12.375,45.875){\makebox(0,0)[cc]{\scriptsize$x^\prime$}}
\put(23.641,49.313){\line(1,-1){3.889}}
\put(27.619,45.335){\line(0,-1){4.95}}
\put(23.775,49.178){\circle*{1.226}}
\put(27.491,45.164){\circle*{1.226}}
\put(27.64,40.556){\circle*{1.226}}
\put(23.924,36.543){\circle*{1.226}}
\put(19.613,45.164){\circle*{1.226}}
\put(19.613,40.488){\circle*{1.226}}
\put(19.625,45.125){\line(0,-1){4.75}}
\multiput(19.5,40.375)(.036764706,-.033613445){119}{\line(1,0){.036764706}}
\put(23,31.875){\makebox(0,0)[cc]{\tiny$\lambda_3(\Gamma_{11})=0.1830$}}
\put(18.125,45.25){\makebox(0,0)[cc]{\scriptsize$x$}}
\put(18,40.75){\makebox(0,0)[cc]{\scriptsize$y$}}
\put(24.125,28.125){\makebox(0,0)[cc]{\footnotesize$\Gamma_{11}$}}
\put(19.625,40.625){\line(1,0){8.375}}
\multiput(19.375,40.625)(.057624113,.033687943){141}{\line(1,0){.057624113}}
\put(24.125,50.8){\makebox(0,0)[cc]{\scriptsize$v^\ast$}}
\put(29.25,45.875){\makebox(0,0)[cc]{\scriptsize$x^\prime$}}
\multiput(19.625,45.25)(.034663866,.033613445){119}{\line(1,0){.034663866}}
\multiput(23.875,36.5)(.033482143,.035714286){112}{\line(0,1){.035714286}}
\put(29.625,41.5){\makebox(0,0)[cc]{\scriptsize$y^\ast$}}
\put(26.125,35.625){\makebox(0,0)[cc]{\scriptsize$y^\prime$}}
\put(60.125,28.125){\makebox(0,0)[cc]{\footnotesize$\Gamma_{13}$}}
\put(4.25,3.75){\makebox(0,0)[cc]{\tiny$\lambda_3(\Gamma_{16})=0.2679$}}
\put(6.875,0){\makebox(0,0)[cc]{\footnotesize$\Gamma_{16}$}}
\put(42,32.125){\makebox(0,0)[cc]{\tiny$\lambda_3(\Gamma_{12})=0.1096$}}
\put(42,28.125){\makebox(0,0)[cc]{\footnotesize$\Gamma_{12}$}}
\put(40.924,48.611){\circle*{1.226}}
\put(36.762,44.746){\circle*{1.226}}
\put(35.125,44.875){\makebox(0,0)[cc]{\scriptsize$x$}}
\put(41.5,50.425){\makebox(0,0)[cc]{\scriptsize$v^\ast$}}
\put(44.153,36.451){\circle*{1.226}}
\put(46.375,35.25){\makebox(0,0)[cc]{\scriptsize$y^\prime$}}
\put(38.613,36.488){\circle*{1.226}}
\put(37,36.796){\makebox(0,0)[cc]{\scriptsize$y$}}
\put(46.488,47.113){\circle*{1.226}}
\put(48.552,47.867){\makebox(0,0)[cc]{\scriptsize$x^\ast$}}
\put(46.488,42.238){\circle*{1.226}}
\put(48.5,40.625){\makebox(0,0)[cc]{\scriptsize$x^\prime$}}
\put(40.875,48.875){\line(-1,-1){4.25}}
\multiput(36.625,44.875)(.03333333,-.14166667){60}{\line(0,-1){.14166667}}
\put(38.375,36.625){\line(1,0){5.875}}
\multiput(46.5,42.375)(-.03358209,-.0858209){67}{\line(0,-1){.0858209}}
\put(46.375,47.125){\line(0,-1){5.375}}
\multiput(40.625,48.75)(.11734694,-.03316327){49}{\line(1,0){.11734694}}
\multiput(40.75,48.625)(.033682635,-.038173653){167}{\line(0,-1){.038173653}}
\multiput(36.625,44.75)(.14925373,.03358209){67}{\line(1,0){.14925373}}
\multiput(38.5,36.5)(.046783626,.033625731){171}{\line(1,0){.046783626}}
\put(59.424,48.486){\circle*{1.226}}
\put(55.262,44.621){\circle*{1.226}}
\put(53.625,44.75){\makebox(0,0)[cc]{\scriptsize$x$}}
\put(60,50.2){\makebox(0,0)[cc]{\scriptsize$v^\ast$}}
\put(62.653,36.326){\circle*{1.226}}
\put(64.875,35.125){\makebox(0,0)[cc]{\scriptsize$y^\prime$}}
\put(57.113,36.363){\circle*{1.226}}
\put(55.5,36.671){\makebox(0,0)[cc]{\scriptsize$y$}}
\put(64.988,46.988){\circle*{1.226}}
\put(66.952,47.742){\makebox(0,0)[cc]{\scriptsize$x^\ast$}}
\put(64.988,42.113){\circle*{1.226}}
\put(67,40.5){\makebox(0,0)[cc]{\scriptsize$x^\prime$}}
\put(59.375,48.75){\line(-1,-1){4.25}}
\multiput(55.125,44.75)(.03333333,-.14166667){60}{\line(0,-1){.14166667}}
\put(56.875,36.5){\line(1,0){5.875}}
\multiput(65,42.25)(-.03358209,-.0858209){67}{\line(0,-1){.0858209}}
\put(64.875,47){\line(0,-1){5.375}}
\multiput(59.125,48.625)(.11734694,-.03316327){49}{\line(1,0){.11734694}}
\multiput(59.25,48.5)(.033682635,-.038173653){167}{\line(0,-1){.038173653}}
\multiput(55.125,44.625)(.14925373,.03358209){67}{\line(1,0){.14925373}}
\multiput(57,36.375)(.046783626,.033625731){171}{\line(1,0){.046783626}}
\multiput(64.75,47.125)(-.0334507,-.15140845){71}{\line(0,-1){.15140845}}
\put(77.924,48.486){\circle*{1.226}}
\put(73.762,44.621){\circle*{1.226}}
\put(72.125,44.75){\makebox(0,0)[cc]{\scriptsize$x$}}
\put(78.99,50.1){\makebox(0,0)[cc]{\scriptsize$v^\ast$}}
\put(81.153,36.326){\circle*{1.226}}
\put(83.375,35.125){\makebox(0,0)[cc]{\scriptsize$y^\prime$}}
\put(75.613,36.363){\circle*{1.226}}
\put(74,36.671){\makebox(0,0)[cc]{\scriptsize$y$}}
\put(83.488,46.988){\circle*{1.226}}
\put(83.488,42.113){\circle*{1.226}}
\put(85.5,40.5){\makebox(0,0)[cc]{\scriptsize$x^\prime$}}
\put(85.9,48.375){\makebox(0,0)[cc]{\scriptsize$x^{**}$}}
\put(78,48.625){\line(-1,-1){4.25}}
\multiput(77.875,48.5)(.033682635,-.038173653){167}{\line(0,-1){.038173653}}
\multiput(78,48.625)(.10969388,-.03316327){49}{\line(1,0){.10969388}}
\multiput(73.75,44.625)(.03348214,-.14732143){56}{\line(0,-1){.14732143}}
\put(75.5,36.375){\line(1,0){5.75}}
\multiput(81.25,36.375)(.03358209,.08768657){67}{\line(0,1){.08768657}}
\multiput(83.5,42.25)(-.046783626,-.033625731){171}{\line(-1,0){.046783626}}
\multiput(83.5,47)(-.033613445,-.043592437){238}{\line(0,-1){.043592437}}
\multiput(73.75,44.75)(.1547619,.03373016){63}{\line(1,0){.1547619}}
\put(80,32.125){\makebox(0,0)[cc]{\tiny$\lambda_3(\Gamma_{14})=0.1096$}}
\put(61,32){\makebox(0,0)[cc]{\tiny$\lambda_3(\Gamma_{13})=0.1873$}}
\put(78.375,28.25){\makebox(0,0)[cc]{\footnotesize$\Gamma_{14}$}}
\put(97.424,48.611){\circle*{1.226}}
\put(93.262,44.746){\circle*{1.226}}
\put(91.625,44.875){\makebox(0,0)[cc]{\scriptsize$x$}}
\put(98,50.725){\makebox(0,0)[cc]{\scriptsize$v^\ast$}}
\put(100.653,36.451){\circle*{1.226}}
\put(102.875,35.25){\makebox(0,0)[cc]{\scriptsize$y^\prime$}}
\put(95.113,36.488){\circle*{1.226}}
\put(93.5,36.796){\makebox(0,0)[cc]{\scriptsize$y$}}
\put(102.988,47.113){\circle*{1.226}}
\put(102.988,42.238){\circle*{1.226}}
\put(105,40.625){\makebox(0,0)[cc]{\scriptsize$x^\prime$}}
\put(105.1,48.5){\makebox(0,0)[cc]{\scriptsize$x^{**}$}}
\put(97.5,48.75){\line(-1,-1){4.25}}
\multiput(97.375,48.625)(.033682635,-.038173653){167}{\line(0,-1){.038173653}}
\multiput(97.5,48.75)(.10969388,-.03316327){49}{\line(1,0){.10969388}}
\multiput(93.25,44.75)(.03348214,-.14732143){56}{\line(0,-1){.14732143}}
\put(95,36.5){\line(1,0){5.75}}
\multiput(100.75,36.5)(.03358209,.08768657){67}{\line(0,1){.08768657}}
\multiput(103,42.375)(-.046783626,-.033625731){171}{\line(-1,0){.046783626}}
\multiput(103,47.125)(-.033613445,-.043592437){238}{\line(0,-1){.043592437}}
\multiput(93.25,44.875)(.1547619,.03373016){63}{\line(1,0){.1547619}}
\put(100,32.125){\makebox(0,0)[cc]{\tiny$\lambda_3(\Gamma_{15})=0.1873$}}
\put(98.625,28.25){\makebox(0,0)[cc]{\footnotesize$\Gamma_{15}$}}
\multiput(102.875,47.25)(-.03358209,-.16231343){67}{\line(0,-1){.16231343}}
\put(5.799,20.361){\circle*{1.226}}
\put(1.637,16.496){\circle*{1.226}}
\put(6.675,22.1){\makebox(0,0)[cc]{\scriptsize$v^\ast$}}
\put(9.028,8.201){\circle*{1.226}}
\put(11.25,7){\makebox(0,0)[cc]{\scriptsize$y^\prime$}}
\put(3.488,8.238){\circle*{1.226}}
\put(1.875,8.546){\makebox(0,0)[cc]{\scriptsize$y$}}
\put(11.363,18.863){\circle*{1.226}}
\put(11.363,13.988){\circle*{1.226}}
\put(0,16.625){\makebox(0,0)[cc]{\scriptsize$x_1$}}
\put(12.975,15.55){\makebox(0,0)[cc]{\scriptsize$x^{*}$}}
\put(13.875,18.625){\makebox(0,0)[cc]{\scriptsize$x_2$}}
\multiput(5.875,20.375)(-.036764706,-.033613445){119}{\line(-1,0){.036764706}}
\multiput(5.625,20.375)(.12777778,-.03333333){45}{\line(1,0){.12777778}}
\multiput(5.375,20.375)(.033571429,-.036428571){175}{\line(0,-1){.036428571}}
\multiput(1.5,16.5)(.03333333,-.13958333){60}{\line(0,-1){.13958333}}
\multiput(1.375,16.875)(.033738938,-.038163717){226}{\line(0,-1){.038163717}}
\put(3.25,8.125){\line(1,0){5.875}}
\multiput(3.375,8.125)(.046428571,.033571429){175}{\line(1,0){.046428571}}
\multiput(11.375,18.875)(-.033653846,-.045405983){234}{\line(0,-1){.045405983}}
\multiput(8.875,8.125)(.03365385,.07692308){78}{\line(0,1){.07692308}}
\multiput(11.25,19)(-.03333333,-.145){75}{\line(0,-1){.145}}
\put(11.25,19){\line(0,-1){5.25}}
\put(23.563,21.37){\line(1,-1){3.889}}
\put(19.321,17.481){\line(1,0){8.22}}
\put(19.321,17.481){\line(0,-1){4.861}}
\put(27.541,17.392){\line(0,-1){4.95}}
\multiput(19.321,12.531)(.035793388,-.033603306){121}{\line(1,0){.035793388}}
\put(19.32,17.481){\line(1,0){8.22}}
\put(19.409,12.531){\line(1,0){8.132}}
\put(23.549,21.111){\circle*{1.226}}
\put(27.711,17.246){\circle*{1.226}}
\put(27.562,12.192){\circle*{1.226}}
\put(19.238,12.192){\circle*{1.226}}
\put(19.387,17.246){\circle*{1.226}}
\put(23.403,8.701){\circle*{1.226}}
\multiput(23.415,8.838)(.045103093,.033505155){97}{\line(1,0){.045103093}}
\put(24,22.8){\makebox(0,0)[cc]{\scriptsize$v^\ast$}}
\put(17.75,17.375){\makebox(0,0)[cc]{\scriptsize$x$}}
\put(17.625,12.5){\makebox(0,0)[cc]{\scriptsize$y$}}
\put(27.625,17.25){\line(-1,-2){4.375}}
\put(24.875,7.375){\makebox(0,0)[cc]{\scriptsize$y^\ast$}}
\put(29.375,13){\makebox(0,0)[cc]{\scriptsize$y^\prime$}}
\put(29.375,17.75){\makebox(0,0)[cc]{\scriptsize$x^\prime$}}
\multiput(19.125,17.375)(.0390625,.033482143){112}{\line(1,0){.0390625}}
\put(23.25,0){\makebox(0,0)[cc]{\footnotesize$\Gamma_{17}$}}
\put(24.125,3.625){\makebox(0,0)[cc]{\scriptsize$\lambda_3(\Gamma_{17})=0.1096$}}
\put(41.174,20.736){\circle*{1.226}}
\put(37.012,16.871){\circle*{1.226}}
\put(35.375,17){\makebox(0,0)[cc]{\scriptsize$x$}}
\put(41.75,22.55){\makebox(0,0)[cc]{\scriptsize$v^\ast$}}
\put(44.403,8.576){\circle*{1.226}}
\put(46.625,7.375){\makebox(0,0)[cc]{\scriptsize$y^\prime$}}
\put(38.863,8.613){\circle*{1.226}}
\put(37.25,8.921){\makebox(0,0)[cc]{\scriptsize$y$}}
\put(46.738,19.238){\circle*{1.226}}
\put(48.702,19.992){\makebox(0,0)[cc]{\scriptsize$x^\ast$}}
\put(46.738,14.363){\circle*{1.226}}
\put(48.75,12.75){\makebox(0,0)[cc]{\scriptsize$x^\prime$}}
\put(41.125,20.75){\line(-1,-1){4.25}}
\multiput(37,17)(.03348214,-.15178571){56}{\line(0,-1){.15178571}}
\put(38.875,8.5){\line(1,0){5.5}}
\multiput(44.25,8.625)(.03365385,.07532051){78}{\line(0,1){.07532051}}
\put(46.75,14.5){\line(0,1){5}}
\multiput(41,20.875)(.11734694,-.03316327){49}{\line(1,0){.11734694}}
\multiput(41,20.875)(.033625731,-.03874269){171}{\line(0,-1){.03874269}}
\multiput(36.875,16.875)(.14552239,.03358209){67}{\line(1,0){.14552239}}
\multiput(36.75,16.875)(.14084507,-.0334507){71}{\line(1,0){.14084507}}
\multiput(46.75,19.25)(-.033653846,-.045940171){234}{\line(0,-1){.045940171}}
\put(44.5,3.625){\makebox(0,0)[cc]{\scriptsize$\lambda_3(\Gamma_{18})=0.1505$}}
\put(42.25,.25){\makebox(0,0)[cc]{\footnotesize$\Gamma_{18}$}}
\end{picture}
\end{center}
\caption{The graphs $\Gamma_{10}$--$\Gamma_{18}$ (for the proofs of Lemmas~\ref{lem-4-4} and~\ref{lem-4-5}).}\label{fig-4}
\end{figure}

It remains to consider $X$-complete graphs of $\mathcal{C}_3$. Such a graph is called \emph{reduced} if, for every $x,x'\in X$, we have $N_Y(x)\subseteq N_Y(x')$ or $N_Y(x')\subseteq N_Y(x)$, and \emph{unreduced} otherwise. For unreduced graphs, there exist vertices $x,x'\in X$ such that $N_Y(x)\setminus N_Y(x')\not=\emptyset$ and $N_Y(x')\setminus N_Y(x)\not=\emptyset$. In the same spirit, such vertices are called \emph{unreduced vertices}. For a reduced graph we may assume that $\emptyset=N_Y(v^\ast)\subseteq N_Y(x_1)\subseteq N_Y(x_2)\subseteq\cdots\subseteq N_Y(x_t)$ for $X=\{x_1, x_2,\ldots, x_t\}$.

\begin{lem}\label{lem-4-5}
Every unreduced graph of $\mathcal{C}_3$ belongs to $\mathcal{T}_n^s(III)$.
\end{lem}
\begin{proof}
Let $x,x'\in X$ be unreduced vertices. There exist $y\in N_Y(x)\setminus N_Y(x')$ and $y'\in N_Y(x')\setminus N_Y(x)$, and so $G[x,x',y',y]\cong C_4$. It suffices to verify that $C_4$ is congruent. If there exists $y^*\in N_Y(x') \setminus N_Y(x)$ other than $y'$, then $G[v^\ast,x,y,y^*,y',x'] \cong \Gamma_{17}$ (see Figure \ref{fig-4}) is  forbidden for $G$. Hence $N_Y(x')\setminus N_Y(x)=\{y'\}$. Similarly, we have $N_Y(x)\setminus N_Y(x')=\{y\}$. Thus $N_Y(x)\setminus \{y\}=N_Y(x')\setminus\{y'\}$, and then
$$N_G(x)\setminus\{y, x'\}=(X\setminus \{x, x'\})\cup (N_Y(x)\setminus\{y\})=N_G(x')\setminus \{x, y'\}.$$

On the other hand, $x\in N_X(y)\setminus N_X(y')$ and $x'\in N_X(y')\setminus N_X(y)$. If there exists $x^\ast\in N_X(y)\setminus N_X(y')$ other than $x$, then $G[v^\ast,x,y,y',x',x^\ast]\cong \Gamma_{18}$ (see Figure \ref{fig-4}) is  forbidden for $G$. Hence $N_X(y)\setminus N_X(y')=\{x\}$. Similarly, we have $N_X(y')\setminus N_X(y)=\{x'\}$. Thus $N_X(y)\setminus \{x\}=N_X(y')\setminus\{x'\}$, and then
$$N_G(y)\setminus\{y', x\}=(Y\setminus \{y, y'\})\cup (N_X(y)\setminus\{x\})=N_G(y')\setminus \{x', y\}.$$

Hence, the quadrangle $xx'y'y$ is congruent, where $xx'$ and $y'y$ is a pair of congruent edges. It follows that $x$ is a congruent vertex of III-type, which leads to the desired result.
\end{proof}

In what follows we will see that reduced graphs are not contained in $\mathcal{T}^s_n(I)\cup\mathcal{T}^s_n(II)\cup\mathcal{T}_n^s(III)$, i.e.~they are underivable. In order to characterize them, we need the notion of a canonical graph introduced in \cite{Per}. For a graph $G$, we say that $u,v\in V(G)$ are in relation $\rho$, designated by $u\rho v$, if and only if $u\sim v$ and $N_G(u)\setminus v=N_G(v)\setminus u$. Clearly, $\rho$ is symmetric and transitive. Accordingly, the vertex set $V(G)$ is partitioned as $V(G)=V_1\cup V_2\cup\cdots\cup V_k$, where $v_i\in V_i$ and $V_i=\{x\in V(G)\mid x\rho v_i\}$. By  definition of $\rho$, we get that $V_i$ induces a clique $K_{n_i}$ ($n_i=|V_i|$) for each $i$, and that every vertex of $V_i$ is adjacent to every vertex of $V_j$ if and only if $v_i\sim v_j$ in $G$. The \emph{canonical graph} of $G$, denoted by $G^c$, is the induced subgraph $G[\{v_1,v_2,\ldots,v_k\}]$. Then we have
\begin{equation}\label{de-eq-1}G\cong G^c[K_{n_1}, K_{n_2},\ldots,K_{n_k}]\end{equation}
which is just the generalized lexicographic product of $G^c$ with $K_{n_1}$, $K_{n_2}$,\ldots,$K_{n_k}$.

In the forthcoming Lemma~\ref{lem-4-6} we prove that the canonical graph $G^c$ of a reduced $X$-complete graph $G$ is just the reduced half-complete graph $G_k$ (defined in Section~\ref{sec:prel}) for some positive integer~$k$. The following lemma from~\cite{M.R.Oboudi2} is needed.

\begin{lem}\cite{M.R.Oboudi2}\label{lem-4-7}
Let $G\cong P_3[K_{n_1}, K_{n_2}, K_{n_3}]$ and $P_3\cong G_3$ be a reduced half-complete graph, where $n_1, n_2, n_3$ are  positive integers. The following holds true.
\begin{enumerate}[{\rm (i)}]
\item If $n_1=n_2=n_3=1$, then $G\cong P_3$ and $\lambda_3(G)=-\sqrt{2}$;
\item If $n_1=n_2=1$ and $n_3\geq 2$, then $\lambda_3(G)=-1$;
\item If $n_1n_2> 1$, then $\lambda_3(G)=-1$.
\end{enumerate}
\end{lem}

We proceed with the announced lemma.

\begin{lem}\label{lem-4-6}
Let $G$ be a reduced $X$-complete graph. Then there exists a reduced half-complete graph~$G_k$  such that $G\cong G_k[K_{n_1}, K_{n_2},\ldots,K_{n_k}]$, where $4\le k\le 14$ and $n_1+n_2+\cdots+n_k=n$.
\end{lem}
\begin{proof}
From Eq.~(\ref{de-eq-1}) we have $G\cong G^c[K_{n_1}, K_{n_2},\ldots,$ $K_{n_k}]$, where $G^c\cong G[\{v_1, v_2, \ldots,$ $v_k\}]$ and $V_i=\{x\in V(G)\mid x\rho v_i\}$ induces the clique $K_{n_i}$. We need to show that the canonical graph  $G^c$ is just a reduced half-complete graph $G_k$ for some $k$. Without loss of generality, we assume that $v_1=v^*$ (a vertex with minimum degree of $G^c$) and denote $t_c=d_{G^c}(v_1)$. Let $X_c=N_{G^c}(v_1)=\{x_1,x_2,\ldots,x_{t_c}\}\subset \{v_1, v_2, \ldots,v_k\}$ and  $Y_c=\{v_2,v_3,\ldots,v_k\}\backslash X_c=\{y_1,y_2, \ldots,y_{k-t_c-1}\}$. We see that $X_c$ is a subset of $X$ and also $Y_c\subset Y$. Since $G[X]$ and $G[Y]$ are  cliques in $G$, $G^c[X_c]$ and $G^c[Y_c]$ are cliques in~$G^c$. In addition, since $G$ is  a reduced $X$-complete graph, $G^c$ is also a reduced $X_c$-complete graph, i.e.~$N_{Y_c}(x_i)\subseteq N_{Y_c}(x_j)$ or $N_{Y_c}(x_j)\subseteq N_{Y_c}(x_i)$ for $x_i,x_j\in X_c$. Moreover, we have $N_{Y_c}(x_i)\not=N_{Y_c}(x_j)$ in $G^c$ since  $G^c$ is  canonical, and thus we may assume that $N_{Y_c}(v_1)\subset N_{Y_c}(x_1)\subset \cdots\subset N_{Y_c}(x_{t_c})$. The latter implies that $N_{X_c}(y_1)\subset N_{X_c}(y_2)\subset \cdots \subset N_{X_c}(y_{k-t_c-1})$. Thus, for $G^c$ we have
\begin{equation}\label{de-eq-2}
0=|N_{Y_c}(v_1)|<|N_{Y_c}(x_1)|< \cdots <|N_{Y_c}(x_{t_c})|\leq |Y_c|=k-t_c-1,
\end{equation}
and
\begin{equation}\label{de-eq-3}
0\leq |N_{X_c}(y_1)|<|N_{X_c}(y_2)|<\cdots < |N_{X_c}(y_{k-t_c-1})|\leq |X_c|= t_c.
\end{equation}
From Eq.~(\ref{de-eq-2}), we have $t_c\leq |N_{Y_c}(x_{t_c})|\leq k-t_c-1$. Similarly, $k-t_c-2\leq |N_{X_c}(y_{k-t_c-1})|\leq t_c$ follows from Eq.~(\ref{de-eq-3}). Thus $k-2\leq 2t_c\leq k-1$, and then $t_c =\lceil\frac{k}{2}\rceil-1$ and $k-t_c-1=\lfloor\frac{k}{2}\rfloor$.

If $k$ is even, then $t_c=\frac{k}{2}-1$. From Eq.~(\ref{de-eq-3}), we have $|N_{X_c}(y_i)|=i-1$ for $1\leq i\leq k-t_c-1=\frac{k}{2}$. Thus we may assume that
\begin{equation*}
\begin{aligned}
&N_{X_c}(y_1)=\emptyset, N_{X_c}(y_2)=\{x_{\frac{k}{2}-1}\}, \ldots, N_{X_c}(y_{\frac{k}{2}-1})=\{x_{\frac{k}{2}-1}, x_{\frac{k}{2}-2} \ldots, x_{2}\}, \\
&N_{X_c}(y_{\frac{k}{2}})=\{x_{\frac{k}{2}-1}, x_{\frac{k}{2}-2}, \ldots, x_{1}\},
\end{aligned}
\end{equation*}
and then $N_{Y_c}(v_1)=\emptyset, N_{Y_c}(x_1)=\{y_{\frac{k}{2}}\}, \cdots, N_{Y_c}(x_{\frac{k}{2}-2})=\{y_{\frac{k}{2}}, y_{\frac{k}{2}-1}, \ldots, y_{3}\},
N_{Y_c}(x_{\frac{k}{2}-1})=\{y_{\frac{k}{2}}, y_{\frac{k}{2}-1}, \ldots, y_{2} \}.$
It follows that $G^c\cong G_k$.

In a very similar way we verify that $G^c\cong G_k$ if $k$ is odd.

It remains to consider the parameter $k$. Recall that $G\in \mathcal{T}_n^s,~ 2\leq s\leq n-3$, and so we have $p(G)=2$ and $\lambda_3(G)=0$. If $k\leq 2$, then $G\cong G_k[K_{n_1}, K_{n_2},\ldots,K_{n_k}]$ is a complete graph which is impossible, and so $k\ge 3$. If $k=3$, then  $\lambda_3(G)<0$ by Lemma \ref{lem-4-7}. Hence, $k\geq 4$. On the other hand, since $G^c\cong G_k$ is an induced subgraph of $G$, we have $\lambda_3(G_k)\leq \lambda_3(G)=0$. If $k\geq 15$, then $\lambda_3(G_k)\geq \lambda_3(G_{15})$ (this holds because $G_{15}$ is an induced subgraph of $G_k$; see the discussion in Section~\ref{sec:prel}). On the other hand, we compute $\lambda_3(G_{15})= 0.1358>0$. This implies that $k\leq 14$, and we are done.
\end{proof}

Lemma \ref{lem-4-6} gives the structure of every reduced $X$-complete graph. However, not every graph with this structure is reduced $X$-complete. In other words, not every such a graph has exactly two positive eigenvalues. In what follows, we determine the parameters $k, n_1, n_2, \ldots, n_k$ for which $G_k[K_{n_1}, K_{n_2}, \ldots, K_{n_k}]$ is $X$-complete and reduced. It is convenient to label
the vertices of $G_k$ by $V(G_{2r})=\{v_1, v_2,\ldots,v_r, w_1, w_2,\ldots, w_r\}$ for $k=2r$ and $V(G_{2r+1})=\{v_1, v_2, \ldots, v_r, w_1, w_2, \ldots, w_r, v_{r+1}\}$ for $k=2r+1$, as in Figure~\ref{fig-1}. To ease language, we follow \cite{M.Petrovi'c} and abbreviate $G_{2r}[K_{n_1}, K_{n_2} \ldots, $ $K_{n_r},K_{n_{r+1}},\ldots, K_{n_{2r}}]$ to $B_{2r}(n_1, n_2, \ldots, n_r; n_{r+1}, n_{r+2}, \ldots, n_{2r})$ and $G_{2r+1}[K_{n_1}, K_{n_2}, \ldots,K_{n_r},K_{n_{r+1}}, \ldots,$ $K_{n_{2r}}, K_{n_{2r+1}}]$ to $B_{2r+1}$ $(n_1, n_2, \ldots, n_r; n_{r+1}, n_{r+2} \ldots, n_{2r}; n_{2r+1})$. One may observe that $$B_{2r}(n_1, n_2, \ldots,n_r; n_{r+1}, n_{r+2}, \ldots, n_{2r})
\cong B_{2r}(n_{r+1}, n_{r+2}, \ldots,n_{2r};n_1, n_2, \ldots, n_r),$$ and similarly for $B_{2r+1}$. In relation to this, we will always use the former notation for isomorphic graphs. We also set
\begin{equation}\label{eq:B} \mathcal{B}^k_n=\{G\cong B_k(n_1, n_2,\ldots, n_k)\mid 4\leq k\leq 14, n_1+ n_2+\cdots +n_k=n, n_i\geq 1\},\end{equation}
where for the sake of simplicity the semicolons are removed from the notation for $B_k$.

For every $G\in\mathcal{B}^k_n$, we have $\lambda_2(G)>0$ (say, because $G$ contains $P_4$ as an induced subgraph). To extract reduced $X$-complete graphs of $\mathcal{B}^k_n$, in the forthcoming Lemma~\ref{lem-4-12} we show that each of them has at most 14 vertices, which of course implies that there is a finite number of such graphs. In the next step we determine all of them by the computer search. The result of Lemma~\ref{lem-4-12} relies on Lemmas~\ref{lem-1} and~\ref{lem-4-11} which are known from the previous works.

\begin{lem}\cite{F.Duan}\label{lem-1}
	Let $G\in \mathcal{B}^k_n$, where $4 \le k\le 9$ and $n\geq 14$. If $\lambda_3(G)\geq 0$, then $G$ contains an induced subgraph $\Gamma\in \mathcal{B}^k_{14}$ with $\lambda_3(\Gamma)\geq 0$.
\end{lem}

\begin{cor}\label{cor-1}
	Let $G\in \mathcal{B}^k_n$, where $4 \le k\le 9$ and $n\geq 15$. If $\lambda_3(G)\geq 0$, then $G$ contains an induced subgraph $\Gamma\in \mathcal{B}^k_{15}$ with $\lambda_3(\Gamma)\geq 0$.
\end{cor}
\begin{proof}
	According to Lemma \ref{lem-1}, $G$ contains an induced subgraph $\Gamma'\in \mathcal{B}^k_{14}$ with $\lambda_3(\Gamma')\geq 0$ since $\lambda_3(G)\geq 0$. By adding a vertex to $\Gamma'$, we easily get an induced subgraph $\Gamma\in \mathcal{B}^k_{15}$ of $G$ with $\lambda_3(\Gamma)\geq \lambda_3(\Gamma')\geq 0$.
\end{proof}

\begin{lem}\cite{M.R.Oboudi3}\label{lem-4-11}
If $G\in \mathcal{B}_n^k$ for $10\leq k\leq 14$ and $n\geq 14$, then $\lambda_3(G)\geq 0$.
\end{lem}

We use the computer search to obtain the following lemma. Namely, for $4 \le k\le 14$, a $k$-partition of $15$ gives a solution $(n_1,n_2,\ldots,n_k)$ for the equation $n_1+n_2+\cdots+n_k=15$. Such a solution determines the graph $G\cong B_k(n_1,n_2,\ldots,n_k)\in \mathcal{B}_{15}^k$. We consider all the possibilities and verify that for each of them $\lambda_3(G)\neq 0$ holds.

\begin{lem}\label{lem-4-9}
Let $G$ be a graph of $\mathcal{B}_{15}^k$, for $4 \le k\le 14$. Then $\lambda_3(G) \neq 0$.
\end{lem}

We are ready to prove the announced lemma.

\begin{lem}\label{lem-4-12}
There are no reduced $X$-complete graphs in $\mathcal{T}_n^s$ for $n\geq 15$ and $2\leq s\leq n-3$.
\end{lem}
\begin{proof}
By  way of contradiction, let $G$ be a reduced $X$-complete graph belonging to $\mathcal{T}_n^s$, where $n\geq 15$ and $2\leq s\leq n-3$. Then $\lambda_3(G)=\lambda_4(G)=0$, and $G\in \mathcal{B}_n^k, 4\leq k\leq 14$, by Lemma~\ref{lem-4-6}.

We first assume that $4\le k\le 9$. Since $\lambda_3(G)=0$, $G$ contains an induced subgraph $\Gamma\in \mathcal{B}_{15}^k$ with $\lambda_3(\Gamma)\ge 0$ by Corollary~\ref{cor-1}. Moreover, we have $\lambda_3(\Gamma)=0$, since for otherwise $0<\lambda_3(\Gamma)\le \lambda_3(G)$. But $\lambda_3(\Gamma)=0$ contradicts Lemma~\ref{lem-4-9}.

We next assume that $10\le k\le 14$. By deleting $n-15$ vertices of $G$, we obtain an induced subgraph $\Gamma\in \mathcal{B}^k_{15}$ with $\lambda_3(\Gamma)\geq 0$ by Lemma \ref{lem-4-11}, and then $\lambda_3(\Gamma)=0$ as before, which again contradicts Lemma \ref{lem-4-9}, and we are done.
\end{proof}

According to Lemma \ref{lem-4-12}, the number of reduced $X$-complete graphs is finite and we can determine all of them by computer search since each of them belongs to \eqref{eq:B}, i.e.~it has a particular structure. In this way we arrive at the following result.

\begin{lem}\label{lem-4-13}
There are exactly $175$ reduced $X$-complete graphs. They are listed in Table~\ref{tab-3}.
\end{lem}

We see from Table~\ref{tab-3} that every graph of Lemma~\ref{lem-4-13} has 14 vertices and exactly two zero eigenvalues (so, $s=2$ holds). To give a proper insight, we illustrate those with $k=6$ in Figure~\ref{fig-6}.

\begin{table}
\caption{\scriptsize{Reduced $X$-complete graphs $B_k(n_1, n_2, \ldots,n_k)$}.}
\centering
\tiny
\begin{tabular}{|clc|}
\hline\\[-1.5mm]
$k$ & Graphs & $\#$ \\[0.7mm]
\hline\\[-1.8mm]
$4$&  & 0\\[0.5mm]
\hline\\[-1.5mm]
$5$&  & 0\\
\hline\\[-1.5mm]
$6$ &\tabincell{l}{$B_6(4,3,3; 2,1,1)$, $B_6(3,2,4; 2,1,2)$, $B_6(5,2,2; 2,2,1)$, $B_6(3,1,3; 2,2,3)$, $B_6(4,1,2; 2,3,2)$, \\[0.7mm]
$B_6(3,4,2; 3,1,1)$,$B_6(4,2,1; 3,3,1)$.} & 7\\[0.7mm]
\hline\\[-1.5mm]
$7$ &\tabincell{l}{
$B_7(3,3,4; 1,1,1; 1)$, $B_7(2,4,3; 1,1,1; 2)$, $B_7(2,2,5; 1,1,2; 1)$, $B_7(1,3,3; 1,1,2; 3)$,\\[0.7mm]
$B_7(1,2,4; 1,1,3; 2)$, $B_7(4,2,3; 1,2,1; 1)$, $B_7(2,3,2; 1,2,1; 3)$, $B_7(1,2,2; 1,2,2; 4)$,\\[0.7mm]
$B_7(2,1,4; 1,2,3; 1)$, $B_7(3,2,2; 1,3,1; 2)$, $B_7(3,1,3; 1,3,2; 1)$, $B_7(2,1,2; 1,4,2; 2)$,\\[0.7mm]
$B_7(2,5,2; 2,1,1; 1)$, $B_7(2,3,1; 2,3,1; 2)$, $B_7(3,2,1; 2,4,1; 1)$.}& 15\\[0.7mm]
\hline\\[-1.5mm]
$8$ &\tabincell{l}{
$B_8(1,3,3,3; 1,1,1,1)$, $B_8(2,3,3,2; 1,1,1,1)$, $B_8(3,3,3,1; 1,1,1,1)$, $B_8(1,2,4,2; 1,1,1,2)$,\\[0.7mm]
$B_8(2,2,4,1; 1,1,1,2)$, $B_8(1,2,2,4; 1,1,2,1)$, $B_8(2,2,2,3; 1,1,2,1)$, $B_8(3,2,2,2; 1,1,2,1)$,\\[0.7mm]
$B_8(4,2,2,1; 1,1,2,1)$, $B_8(1,1,3,2; 1,1,2,3)$, $B_8(2,1,3,1; 1,1,2,3)$, $B_8(2,1,2,2; 1,1,3,2)$,\\[0.7mm]
$B_8(3,1,2,1; 1,1,3,2)$, $B_8(1,4,2,2; 1,2,1,1)$, $B_8(2,4,2,1; 1,2,1,1)$, $B_8(1,2,3,1; 1,2,1,3)$,\\[0.7mm]
$B_8(2,2,1,2; 1,2,3,1)$, $B_8(3,2,1,1; 1,2,3,1)$, $B_8(2,1,1,1; 1,2,4,2)$, $B_8(1,3,2,1; 1,3,1,2)$,\\[0.7mm]
$B_8(2,3,1,1; 1,3,2,1)$, $B_8(2,2,3,2; 2,1,1,1)$, $B_8(3,1,2,3; 2,1,1,1)$, $B_8(3,2,2,2; 2,1,1,1)$,\\[0.7mm]
$B_8(3,3,2,1; 2,1,1,1)$, $B_8(4,2,1,2; 2,1,1,1)$, $B_8(2,2,3,1; 2,1,1,2)$, $B_8(3,1,1,3; 2,1,1,2)$,\\[0.7mm]
$B_8(2,1,2,3; 2,1,2,1)$, $B_8(4,2,1,1; 2,1,2,1)$, $B_8(2,1,2,2; 2,1,2,2)$, $B_8(3,1,1,2; 2,1,2,2)$,\\[0.7mm]
$B_8(3,1,1,1; 2,1,3,2)$, $B_8(2,3,2,1; 2,2,1,1)$, $B_8(4,1,1,2; 2,2,1,1)$, $B_8(3,1,1,2; 2,2,2,1)$,\\[0.7mm]
$B_8(3,2,1,1; 2,2,2,1)$, $B_8(3,3,1,1; 3,1,1,1)$, $B_8(3,2,1,1; 3,2,1,1)$.}& 39\\
\hline\\[-1.5mm]
$9$ &\tabincell{l}{
$B_9(1,2,3,3; 1,1,1,1; 1)$, $B_9(2,1,2,4; 1,1,1,1; 1)$, $B_9(2,2,2,3; 1,1,1,1; 1)$, $B_9(2,3,2,2; 1,1,1,1; 1)$,\\[0.7mm]
$B_9(2,4,2,1; 1,1,1,1; 1)$, $B_9(3,2,1,3; 1,1,1,1; 1)$, $B_9(1,1,3,3; 1,1,1,1; 2)$, $B_9(2,3,1,2; 1,1,1,1; 2)$,\\[0.7mm]
$B_9(1,2,3,2; 1,1,1,2; 1)$, $B_9(2,1,1,4; 1,1,1,2; 1)$, $B_9(1,1,2,3; 1,1,1,2; 2)$, $B_9(1,2,2,2; 1,1,1,2; 2)$,\\[0.7mm]
$B_9(1,3,2,1; 1,1,1,2; 2)$, $B_9(1,2,1,2; 1,1,1,2; 3)$, $B_9(1,2,3,1; 1,1,1,3; 1)$, $B_9(1,1,1,3; 1,1,1,3; 2)$,\\[0.7mm]
$B_9(1,1,2,4; 1,1,2,1; 1)$, $B_9(3,2,1,2; 1,1,2,1; 1)$, $B_9(2,3,1,1; 1,1,2,1; 2)$, $B_9(1,1,2,3; 1,1,2,2; 1)$,\\[0.7mm]
$B_9(2,1,1,3; 1,1,2,2; 1)$, $B_9(1,2,1,1; 1,1,2,2; 3)$, $B_9(3,2,1,1; 1,1,3,1; 1)$, $B_9(2,1,1,2; 1,1,3,2; 1)$,\\[0.7mm]
$B_9(2,1,1,1; 1,1,4,2; 1)$, $B_9(1,3,2,2; 1,2,1,1; 1)$, $B_9(3,1,1,3; 1,2,1,1; 1)$, $B_9(1,2,2,2; 1,2,1,1; 2)$,\\[0.7mm]
$B_9(2,2,1,2; 1,2,1,1; 2)$, $B_9(1,3,2,1; 1,2,1,2; 1)$, $B_9(2,1,1,3; 1,2,2,1; 1)$, $B_9(2,2,1,2; 1,2,2,1; 1)$,\\[0.7mm]
$B_9(2,3,1,1; 1,2,2,1; 1)$, $B_9(2,1,1,2; 1,3,1,1; 2)$, $B_9(2,4,1,1; 2,1,1,1; 1)$, $B_9(2,3,1,1; 2,2,1,1; 1)$.} & 36\\
\hline\\[-1.5mm]
$10$ &\tabincell{l}{
$B_{10}(1,1,2,3,2; 1,1,1,1,1)$, $B_{10}(1,2,1,2,3; 1,1,1,1,1)$, $B_{10}(1,2,2,2,2; 1,1,1,1,1)$,\\[0.7mm]
$B_{10}(1,2,3,2,1; 1,1,1,1,1)$, $B_{10}(1,3,2,1,2; 1,1,1,1,1)$, $B_{10}(2,1,2,3,1; 1,1,1,1,1)$,\\[0.7mm]
$B_{10}(2,2,1,2,2; 1,1,1,1,1)$, $B_{10}(2,2,2,2,1; 1,1,1,1,1)$, $B_{10}(2,3,2,1,1; 1,1,1,1,1)$,\\[0.7mm]
$B_{10}(3,2,1,2,1; 1,1,1,1,1)$, $B_{10}(1,1,1,3,2; 1,1,1,1,2)$, $B_{10}(1,2,3,1,1; 1,1,1,1,2)$,\\[0.7mm]
$B_{10}(2,1,1,3,1; 1,1,1,1,2)$, $B_{10}(1,1,2,3,1; 1,1,1,2,1)$, $B_{10}(1,2,1,1,3; 1,1,1,2,1)$,\\[0.7mm]
$B_{10}(2,2,1,1,2; 1,1,1,2,1)$, $B_{10}(3,2,1,1,1; 1,1,1,2,1)$, $B_{10}(1,1,1,2,2; 1,1,1,2,2)$,\\[0.7mm]
$B_{10}(1,1,2,2,1; 1,1,1,2,2)$, $B_{10}(2,1,1,2,1; 1,1,1,2,2)$, $B_{10}(1,1,2,1,1; 1,1,1,2,3)$,\\[0.7mm]
$B_{10}(2,1,1,1,1; 1,1,1,3,2)$, $B_{10}(1,3,2,1,1; 1,1,2,1,1)$, $B_{10}(2,1,1,2,2; 1,1,2,1,1)$,\\[0.7mm]
$B_{10}(3,1,1,2,1; 1,1,2,1,1)$, $B_{10}(1,2,1,1,2; 1,1,2,2,1)$, $B_{10}(2,1,1,2,1; 1,1,2,2,1)$,\\[0.7mm]
$B_{10}(2,2,1,1,1; 1,1,2,2,1)$, $B_{10}(1,2,1,1,1; 1,1,3,2,1)$, $B_{10}(1,3,1,1,2; 1,2,1,1,1)$,\\[0.7mm]
$B_{10}(2,3,1,1,1; 1,2,1,1,1)$, $B_{10}(1,2,2,1,1; 1,2,1,1,2)$, $B_{10}(1,2,2,1,1; 1,2,2,1,1)$,\\[0.7mm]
$B_{10}(2,2,1,1,1; 1,2,2,1,1)$, $B_{10}(2,1,1,1,1; 1,2,3,1,1)$, $B_{10}(2,1,1,2,2; 2,1,1,1,1)$,\\[0.7mm]
$B_{10}(2,1,2,2,1; 2,1,1,1,1)$, $B_{10}(2,2,2,1,1; 2,1,1,1,1)$, $B_{10}(3,1,1,1,2; 2,1,1,1,1)$,\\[0.7mm]
$B_{10}(3,2,1,1,1; 2,1,1,1,1)$, $B_{10}(2,1,1,2,1; 2,1,1,1,2)$, $B_{10}(3,1,1,1,1; 2,1,2,1,1)$,\\[0.7mm]
$B_{10}(2,2,1,1,1; 2,2,1,1,1)$.} & 43\\
\hline\\[-1.5mm]
$11$& \tabincell{l}{
$B_{11}(1,1,1,2,3; 1,1,1,1,1;1)$, $B_{11}(1,1,2,2,2; 1,1,1,1,1;1)$; $B_{11}(1,1,3,2,1; 1,1,1,1,1;1)$,\\[0.7mm]
$B_{11}(1,2,2,1,2; 1,1,1,1,1;1)$, $B_{11}(2,1,1,1,3; 1,1,1,1,1;1)$, $B_{11}(2,2,1,1,2; 1,1,1,1,1;1)$,\\[0.7mm]
$B_{11}(2,3,1,1,1; 1,1,1,1,1;1)$, $B_{11}(1,1,2,1,2; 1,1,1,1,1;2)$, $B_{11}(1,1,1,2,2; 1,1,1,1,2;1)$,\\[0.7mm]
$B_{11}(1,1,2,2,1; 1,1,1,1,2;1)$, $B_{11}(1,2,2,1,1; 1,1,1,1,2;1)$, $B_{11}(1,1,1,1,2; 1,1,1,1,2;2)$,\\[0.7mm]
$B_{11}(1,2,1,1,1; 1,1,1,1,2;2)$, $B_{11}(1,1,1,2,1; 1,1,1,1,3;1)$, $B_{11}(1,2,2,1,1; 1,1,2,1,1;1)$,\\[0.7mm]
$B_{11}(2,1,1,1,2; 1,1,2,1,1;1)$, $B_{11}(2,2,1,1,1; 1,1,2,1,1;1)$, $B_{11}(1,1,2,1,1; 1,1,2,1,1;2)$,\\[0.7mm]
$B_{11}(2,1,1,1,1; 1,1,3,1,1;1)$, $B_{11}(1,2,1,1,2; 1,2,1,1,1;1)$.}& 20\\
\hline\\[-1.5mm]
$12$ &\tabincell{l}{
$B_{12}(1,1,1,1,2,2;1,1,1,1,1,1)$, $B_{12}(1,1,1,2,2,1;1,1,1,1,1,1)$, $B_{12}(1,1,2,2,1,1;1,1,1,1,1,1)$,\\[0.7mm]
$B_{12}(1,2,1,1,1,2;1,1,1,1,1,1)$, $B_{12}(1,2,2,1,1,1;1,1,1,1,1,1)$, $B_{12}(2,1,1,1,2,1;1,1,1,1,1,1)$,\\[0.7mm]
$B_{12}(2,2,1,1,1,1;1,1,1,1,1,1)$, $B_{12}(1,1,1,2,1,1;1,1,1,1,1,2)$, $B_{12}(1,1,1,1,2,1;1,1,1,1,2,1)$,\\[0.7mm]
$B_{12}(2,1,1,1,1,1;1,1,1,2,1,1)$, $B_{12}(1,2,1,1,1,1;1,1,2,1,1,1)$, $B_{12}(2,1,1,1,1,1;2,1,1,1,1,1)$.}& 12\\[0.7mm]
\hline\\[-1.5mm]
$13$ & $B_{13}(1,1,1,1,1,2;1,1,1,1,1,1;1)$, $B_{13}(1,1,2,1,1,1;1,1,1,1,1,1;1)$.& 2\\[0.7mm]
\hline\\[-1.5mm]
$14$ & $B_{14}(1,1,1,1,1,1,1;1,1,1,1,1,1,1)$.& 1\\[0.7mm]

\hline
\end{tabular}\label{tab-3}
\end{table}

\begin{figure}[t]
\begin{center}
	\unitlength 1.4mm 
	\linethickness{0.4pt}
	\ifx\plotpoint\undefined\newsavebox{\plotpoint}\fi 
	\setlength{\unitlength}{3.3pt}
	\begin{picture}(125.5,61.25)(0,0)
		\put(14.25,56.75){\oval(21.25,9)[]}
		\put(7.242,57.128){\circle{5.483}}
		\put(14.25,44.75){\oval(21.25,9)[]}
		\put(7.242,45.128){\circle{5.483}}
		\put(5.718,44.851){\circle*{.94}}
		\put(8.843,44.851){\circle*{.94}}
		\put(21.5,45.25){\circle{4.25}}
		\put(21.47,45.351){\circle*{.94}}
		\put(14.5,43.5){\circle{4.25}}
		\put(14.47,43.601){\circle*{.94}}
		\put(5.72,56.47){\circle*{.94}}
		\put(8.845,56.47){\circle*{.94}}
		\put(5.845,58.47){\circle*{.94}}
		\put(8.97,58.47){\circle*{.94}}
		\put(12.72,57.345){\circle*{.94}}
		\put(21.117,56.617){\circle{5.483}}
		\put(21.219,57.97){\circle*{.94}}
		\put(19.594,55.595){\circle*{.94}}
		\put(22.719,55.595){\circle*{.94}}
		\put(14.367,58.367){\circle{5.483}}
		\put(16.22,58.97){\circle*{.94}}
		\multiput(22.75,55.625)(-.03333333,-.23611111){45}{\line(0,-1){.23611111}}
		\multiput(19.375,55.625)(.03373016,-.16666667){63}{\line(0,-1){.16666667}}
		\put(21.25,58){\line(0,-1){12.5}}
		\multiput(19.625,55.625)(-.033717105,-.078947368){152}{\line(0,-1){.078947368}}
		\multiput(22.75,55.625)(-.0336734694,-.05){245}{\line(0,-1){.05}}
		\qbezier(21.125,58)(15.188,55.688)(14.5,43.625)
		\put(.25,57){\makebox(0,0)[cc]{\footnotesize$K_{10}$}}
		\put(0,43.625){\makebox(0,0)[cc]{\footnotesize$K_4$}}
		\put(13.97,60.22){\circle*{.94}}
		\multiput(12.625,57.375)(.0336538462,-.0461538462){260}{\line(0,-1){.0461538462}}
		\multiput(16.25,59)(.033717105,-.088815789){152}{\line(0,-1){.088815789}}
		\multiput(13.875,60.25)(.033632287,-.066143498){223}{\line(0,-1){.066143498}}
		\put(14.5,36.25){\makebox(0,0)[cc]{\scriptsize$B_6(4,3,3;2,1,1)$}}
		\put(13.994,20.326){\oval(21.25,9)[]}
		\put(13.994,8.326){\oval(21.25,9)[]}
		\put(8.111,21.568){\circle{5.483}}
		\put(6.589,20.91){\circle*{.94}}
		\put(9.714,20.91){\circle*{.94}}
		\put(6.714,22.911){\circle*{.94}}
		\put(9.839,22.911){\circle*{.94}}
		\put(20.736,21.943){\circle{5.483}}
		\put(22.588,22.546){\circle*{.94}}
		\put(6.994,8.826){\circle{4.25}}
		\multiput(6.994,8.826)(-.03125,.03125){4}{\line(0,1){.03125}}
		\put(5.964,8.796){\circle*{.94}}
		\put(8.214,8.671){\circle*{.94}}
		\put(14.486,7.818){\circle{5.483}}
		\put(13.744,19.576){\circle{4.25}}
		\put(13.714,19.677){\circle*{.94}}
		\put(14.714,9.171){\circle*{.94}}
		\put(13.214,6.796){\circle*{.94}}
		\put(16.089,6.921){\circle*{.94}}
		\put(21.361,9.068){\circle{5.483}}
		\put(19.837,8.789){\circle*{.94}}
		\put(22.962,8.789){\circle*{.94}}
		\multiput(13.619,19.701)(.033653846,-.059065934){182}{\line(0,-1){.059065934}}
		\multiput(13.744,19.451)(.0336363636,-.0381818182){275}{\line(0,-1){.0381818182}}
		\put(19.714,20.796){\circle*{.94}}
		\multiput(19.619,20.826)(-.03362069,-.081896552){145}{\line(0,-1){.081896552}}
		\multiput(19.744,20.826)(-.033564815,-.126157407){108}{\line(0,-1){.126157407}}
		\qbezier(19.619,20.826)(14.994,17.826)(13.369,6.826)
		\put(19.744,20.826){\line(0,-1){12}}
		\multiput(19.619,20.701)(.033415842,-.11509901){101}{\line(0,-1){.11509901}}
		\multiput(22.619,22.576)(-.03353659,-.16615854){82}{\line(0,-1){.16615854}}
		\put(22.744,22.701){\line(0,-1){13.875}}
		\multiput(22.619,22.576)(-.033695652,-.058695652){230}{\line(0,-1){.058695652}}
		\multiput(22.619,22.576)(-.033678756,-.081606218){193}{\line(0,-1){.081606218}}
		\qbezier(22.619,22.701)(9.369,13.701)(13.119,6.701)
		\put(.494,21.076){\makebox(0,0)[cc]{\footnotesize$K_7$}}
		\put(.244,7.326){\makebox(0,0)[cc]{\footnotesize$K_7$}}
		\put(14.369,.201){\makebox(0,0)[cc]{\scriptsize$B_6(4,1,2;2,3,2)$}}
		\put(48,56.625){\oval(21.25,9)[]}
		\put(48,44.625){\oval(21.25,9)[]}
		\put(40.992,45.003){\circle{5.483}}
		\put(39.468,44.726){\circle*{.94}}
		\put(42.593,44.726){\circle*{.94}}
		\put(55.25,45.125){\circle{4.25}}
		\put(48.25,43.375){\circle{4.25}}
		\put(48.22,43.476){\circle*{.94}}
		\put(46.47,57.22){\circle*{.94}}
		\put(48.117,58.242){\circle{5.483}}
		\put(41.617,56.617){\circle{5.483}}
		\put(41.717,57.97){\circle*{.94}}
		\put(40.092,55.595){\circle*{.94}}
		\put(43.217,55.595){\circle*{.94}}
		\put(55.47,43.845){\circle*{.94}}
		\put(54.867,56.491){\circle{5.483}}
		\put(53.344,55.833){\circle*{.94}}
		\put(56.47,55.833){\circle*{.94}}
		\put(53.469,57.834){\circle*{.94}}
		\put(56.595,57.834){\circle*{.94}}
		\put(49.595,58.595){\circle*{.94}}
		\put(54.72,46.22){\circle*{.94}}
		\put(35,56.75){\makebox(0,0)[cc]{\footnotesize$K_9$}}
		\put(34.625,44.375){\makebox(0,0)[cc]{\footnotesize$K_5$}}
		\multiput(46.375,57.125)(.0336734694,-.043877551){245}{\line(0,-1){.043877551}}
		\multiput(46.25,57.125)(.0336715867,-.0488929889){271}{\line(0,-1){.0488929889}}
		\multiput(49.625,58.625)(.033717105,-.081414474){152}{\line(0,-1){.081414474}}
		\multiput(49.5,58.75)(.033625731,-.087719298){171}{\line(0,-1){.087719298}}
		\multiput(53.25,55.75)(-.03362069,-.084482759){145}{\line(0,-1){.084482759}}
		\multiput(56.5,56)(-.033713693,-.05186722){241}{\line(0,-1){.05186722}}
		\multiput(56.75,57.875)(-.0337301587,-.056547619){252}{\line(0,-1){.056547619}}
		\qbezier(53.5,57.875)(50.438,55.813)(48.125,43.5)
		\multiput(53.25,56)(.03333333,-.21944444){45}{\line(0,-1){.21944444}}
		\multiput(56.375,56)(-.03365385,-.1875){52}{\line(0,-1){.1875}}
		\multiput(56.375,55.75)(-.0336538,-.4519231){26}{\line(0,-1){.4519231}}
		\qbezier(56.625,57.875)(59.25,51.438)(55.375,43.75)
		\qbezier(53.5,57.75)(55.063,53.063)(55.375,44.125)
		\put(48.875,36.125){\makebox(0,0)[cc]{\scriptsize$B_6(3,2,4;2,1,2)$}}
		\put(69.125,56.875){\makebox(0,0)[cc]{\footnotesize$K_9$}}
		\put(68.75,44.5){\makebox(0,0)[cc]{\footnotesize$K_{5}$}}
		\put(81.75,56.75){\oval(21.25,9)[]}
		\put(81.75,44.75){\oval(21.25,9)[]}
		\put(75.867,57.992){\circle{5.483}}
		\put(74.345,57.334){\circle*{.94}}
		\put(77.47,57.334){\circle*{.94}}
		\put(74.47,59.335){\circle*{.94}}
		\put(77.595,59.335){\circle*{.94}}
		\put(76.095,58.345){\circle*{.94}}
		\put(81.867,56.117){\circle{5.483}}
		\put(80.343,55.839){\circle*{.94}}
		\put(83.468,55.839){\circle*{.94}}
		\put(86.844,57.345){\circle*{.94}}
		\put(88.492,58.367){\circle{5.483}}
		\put(90.344,58.97){\circle*{.94}}
		\put(74.75,45.25){\circle{4.25}}
		\multiput(74.75,45.25)(-.03125,.03125){4}{\line(-1,0){.03125}}
		\put(73.72,45.22){\circle*{.94}}
		\put(75.97,45.095){\circle*{.94}}
		\put(82.242,44.242){\circle{5.483}}
		\put(81.094,43.844){\circle*{.94}}
		\put(83.969,43.969){\circle*{.94}}
		\put(89.25,46.25){\circle{4.25}}
		\put(89.22,46.351){\circle*{.94}}
		\multiput(80.25,55.75)(.0336538462,-.0355769231){260}{\line(0,-1){.0355769231}}
		\multiput(83.375,55.875)(.033571429,-.054285714){175}{\line(0,-1){.054285714}}
		\multiput(86.875,57.375)(-.033571429,-.077857143){175}{\line(0,-1){.077857143}}
		\multiput(87,57.625)(-.03370787,-.15308989){89}{\line(0,-1){.15308989}}
		\multiput(86.75,57.5)(.03365385,-.14423077){78}{\line(0,-1){.14423077}}
		\multiput(90.375,58.875)(-.0336363636,-.055){275}{\line(0,-1){.055}}
		\multiput(90.5,59)(-.033629442,-.077411168){197}{\line(0,-1){.077411168}}
		\multiput(90.375,59)(-.03308824,-.375){34}{\line(0,-1){.375}}
		\put(81.875,36.125){\makebox(0,0)[cc]{\scriptsize$B_6(5,2,2;2,2,1)$}}
		\put(114.875,56.375){\oval(21.25,9)[]}
		\put(114.875,44.375){\oval(21.25,9)[]}
		\put(108.992,57.617){\circle{5.483}}
		\put(107.47,56.959){\circle*{.94}}
		\put(110.595,56.959){\circle*{.94}}
		\put(121.617,57.992){\circle{5.483}}
		\put(123.469,58.595){\circle*{.94}}
		\put(107.875,44.875){\circle{4.25}}
		\multiput(107.875,44.875)(-.03125,.03125){4}{\line(-1,0){.03125}}
		\put(106.845,44.845){\circle*{.94}}
		\put(109.095,44.72){\circle*{.94}}
		\put(115.367,43.867){\circle{5.483}}
		\put(114.219,43.469){\circle*{.94}}
		\put(117.094,43.594){\circle*{.94}}
		\put(109.345,59.22){\circle*{.94}}
		\put(114.875,55.25){\circle{4.25}}
		\put(114.845,55.351){\circle*{.94}}
		\put(122.242,44.867){\circle{5.483}}
		\put(122.468,46.595){\circle*{.94}}
		\put(123.718,44.22){\circle*{.94}}
		\multiput(114.875,55.375)(.033632287,-.03867713){223}{\line(0,-1){.03867713}}
		\multiput(114.875,55.375)(.0336538462,-.04375){260}{\line(0,-1){.04375}}
		\put(120.595,59.845){\circle*{.94}}
		\multiput(120.5,59.75)(-.033730159,-.086640212){189}{\line(0,-1){.086640212}}
		\multiput(120.625,59.875)(-.033653846,-.15625){104}{\line(0,-1){.15625}}
		\put(120.595,44.22){\circle*{.94}}
		\put(120.625,59.875){\line(0,-1){15.875}}
		\multiput(114.75,55.25)(.033625731,-.06505848){171}{\line(0,-1){.06505848}}
		\put(123.5,58.625){\line(0,-1){14}}
		\multiput(123.375,58.75)(-.03353659,-.17682927){82}{\line(0,-1){.17682927}}
		\multiput(123.375,58.75)(-.0336538,-.4711538){26}{\line(0,-1){.4711538}}
		\multiput(123.375,58.875)(-.0337078652,-.0571161049){267}{\line(0,-1){.0571161049}}
		\multiput(123.375,58.75)(-.033602151,-.080645161){186}{\line(0,-1){.080645161}}
		\put(121.595,57.22){\circle*{.94}}
		\multiput(121.625,57.375)(-.033632287,-.061659193){223}{\line(0,-1){.061659193}}
		\multiput(121.625,57.125)(-.033514493,-.097826087){138}{\line(0,-1){.097826087}}
		\multiput(121.625,57)(.0336538,-.3942308){26}{\line(0,-1){.3942308}}
		\multiput(121.5,57.375)(.03358209,-.19589552){67}{\line(0,-1){.19589552}}
		\multiput(121.5,57.125)(-.0336538,-.4903846){26}{\line(0,-1){.4903846}}
		\put(101.5,56.625){\makebox(0,0)[cc]{\footnotesize$K_7$}}
		\put(101.25,43.25){\makebox(0,0)[cc]{\footnotesize$K_7$}}
		\put(115.875,36.125){\makebox(0,0)[cc]{\scriptsize$B_6(3,1,3;2,2,3)$}}
		\put(47.625,20.375){\oval(21.25,9)[]}
		\put(47.625,8.375){\oval(21.25,9)[]}
		\put(52.719,20.97){\circle*{.94}}
		\put(54.367,21.992){\circle{5.483}}
		\put(56.219,22.595){\circle*{.94}}
		\put(55.125,9.875){\circle{4.25}}
		\put(55.095,9.976){\circle*{.94}}
		\put(40.992,21.367){\circle{5.483}}
		\put(39.844,20.969){\circle*{.94}}
		\put(42.719,21.094){\circle*{.94}}
		\put(41.345,22.845){\circle*{.94}}
		\put(47.992,19.742){\circle{5.483}}
		\put(46.47,19.084){\circle*{.94}}
		\put(49.595,19.084){\circle*{.94}}
		\put(46.595,21.084){\circle*{.94}}
		\put(49.72,21.084){\circle*{.94}}
		\put(41.367,7.992){\circle{5.483}}
		\put(39.843,7.713){\circle*{.94}}
		\put(42.968,7.713){\circle*{.94}}
		\put(41.719,9.345){\circle*{.94}}
		\put(48,7.5){\circle{4.25}}
		\multiput(48,7.5)(-.03125,.03125){4}{\line(0,1){.03125}}
		\put(47.845,7.72){\circle*{.94}}
		\multiput(46.5,18.875)(.0336914063,-.03515625){256}{\line(0,-1){.03515625}}
		\multiput(49.625,19.25)(.033536585,-.056402439){164}{\line(0,-1){.056402439}}
		\multiput(46.625,21)(.0336345382,-.0441767068){249}{\line(0,-1){.0441767068}}
		\multiput(49.75,21)(.033536585,-.067835366){164}{\line(0,-1){.067835366}}
		\multiput(52.75,21)(.0334507,-.15492958){71}{\line(0,-1){.15492958}}
		\multiput(56.125,22.625)(-.03308824,-.36029412){34}{\line(0,-1){.36029412}}
		\multiput(52.75,21.125)(-.033687943,-.094858156){141}{\line(0,-1){.094858156}}
		\multiput(56.25,22.625)(-.0336345382,-.0597389558){249}{\line(0,-1){.0597389558}}
		\put(34.5,7.375){\makebox(0,0)[cc]{\footnotesize$K_5$}}
		\put(34.25,20.75){\makebox(0,0)[cc]{\footnotesize$K_9$}}
		\put(47.875,0){\makebox(0,0)[cc]{\scriptsize$B_6(3,4,2;3,1,1)$}}
		\put(81.875,20.75){\oval(21.25,9)[]}
		\put(81.875,8.75){\oval(21.25,9)[]}
		\put(75.992,21.992){\circle{5.483}}
		\put(74.47,21.334){\circle*{.94}}
		\put(77.595,21.334){\circle*{.94}}
		\put(74.595,23.335){\circle*{.94}}
		\put(77.72,23.335){\circle*{.94}}
		\put(81.992,20.117){\circle{5.483}}
		\put(80.468,19.839){\circle*{.94}}
		\put(83.593,19.839){\circle*{.94}}
		\put(74.875,9.25){\circle{4.25}}
		\multiput(74.875,9.25)(-.03125,.03125){4}{\line(0,1){.03125}}
		\put(82.367,8.242){\circle{5.483}}
		\put(81.219,7.844){\circle*{.94}}
		\put(84.094,7.969){\circle*{.94}}
		\put(89.375,10.25){\circle{4.25}}
		\put(89.345,10.351){\circle*{.94}}
		\put(82.345,9.845){\circle*{.94}}
		\put(89.375,21.625){\circle{4.25}}
		\put(89.345,21.726){\circle*{.94}}
		\put(75.095,10.72){\circle*{.94}}
		\put(73.845,8.845){\circle*{.94}}
		\put(76.095,8.72){\circle*{.94}}
		\multiput(80.375,19.875)(.0337078652,-.0355805243){267}{\line(0,-1){.0355805243}}
		\multiput(83.625,19.75)(.033625731,-.055555556){171}{\line(0,-1){.055555556}}
		\put(89.25,21.875){\line(0,-1){11.625}}
		\multiput(89.25,21.875)(-.03370098,-.058823529){204}{\line(0,-1){.058823529}}
		\multiput(89.25,21.875)(-.033717105,-.092105263){152}{\line(0,-1){.092105263}}
		\qbezier(89.25,21.875)(84.563,19.375)(81.125,7.875)
		\put(69,21.125){\makebox(0,0)[cc]{\footnotesize$K_7$}}
		\put(69,7.625){\makebox(0,0)[cc]{\footnotesize$K_7$}}
		\put(82,.25){\makebox(0,0)[cc]{\scriptsize$B_6(4,2,1;3,3,1)$}}
	\end{picture}
\end{center}
\caption{The graphs with $k=6$ of Table~\ref{tab-3}.}\label{fig-6}
\end{figure}

At the end of this section, we complete the proof of Theorem~\ref{thm-1-2}.

\medskip

{\it Proof of Theorem \ref{thm-1-2}(b).} According to Lemma~\ref{lem-4-1}, we have partitioned graphs under consideration into the classes $\mathcal{C}_1-\mathcal{C}_3$. In Lemmas~\ref{lem-4-2} and~\ref{lem-4-3} we proved that those of $\mathcal{C}_1\cup \mathcal{C}_2$ belong to $\mathcal{T}_n^s(I)\cup \mathcal{T}_n^s(II)$. Lemma~\ref{lem-4-4} shows that $X$-incomplete graphs of $\mathcal{C}_3$ belong to $\mathcal{T}_n^s(I)\cup \mathcal{T}_n^s(III)$ and Lemma~\ref{lem-4-5} shows that unreduced $X$-complete graphs of $\mathcal{C}_3$ belong to $\mathcal{T}_n^s(III)$. Finally, Lemma~\ref{lem-4-13} completes this proof by showing that there are exactly 175 reduced $X$-complete graphs of $\mathcal{C}_3$ and that $s=2$ holds for each of them.\qed

\section{Conclusion}\label{sec:rec}

In this section we give a short recapitulation of the previous results along with an example. Recall that the collection of graphs with exactly two positive eigenvalues is denoted by $\mathcal{T}_n=\bigcup_{s=0}^{n-3}\mathcal{T}^s_n$.

A complete characterization of graphs belonging to $\mathcal{T}_n$ is given in Theorems~\ref{thm-2-3}, \ref{thm-3-1} and~\ref{thm-1-2}. The first two results are proved in \cite{M.R.Oboudi3} and \cite{F.Duan}, respectively, while the third one is our contribution. We have seen that disconnected graphs of $\mathcal{T}_n$ are listed explicitly in the previous statements, while   connected ones are partitioned into several infinite families (also listed in the statements) and a finite number of the additional individual graphs. Individual graphs are given in \cite{M.R.Oboudi3} (the 601 graphs of $T_n^0$),~\cite{F.Duan} (the 802 graphs of $T_n^1$) and Table~\ref{tab-3} (the 175 graphs of $T_n^2$).  For $s\geq 2$, infinite families of connected graphs consist of the so-called derivable graphs obtained by adding congruent vertices of type I, II or III. Apart from them, there are exactly 175 additional individual graphs (also called underivable graphs and for each of them we have $s=2$).

We conclude the section with an example.

\begin{exa}\label{exa:e}
In Figure~\ref{fig-7} we illustrate the four constructions by taking a graph $F\in \mathcal{T}_{n-1}^{s-1}$ and then adding a congruent vertex $u$ of type I, II, III and IV, respectively. In this way we obtain graphs $F_1\in \mathcal{T}_n^s(I)$, $F_2\in \mathcal{T}_n^s(II)$, $F_3\in \mathcal{T}_n^s(III)$ and $F_4\in \mathcal{T}_n^s(IV)$. Since it is obtained by the construction of the IV-type, $F_4$ is necessarily disconnected. Contrary to Example~\ref{exa:H} this is a general one since it starts with any $F\in \mathcal{T}_{n-1}^{s-1}$.
\end{exa}

\vspace{3mm}
\begin{figure}[t]
\begin{center}
\unitlength 1mm 
\linethickness{0.4pt}
\ifx\plotpoint\undefined\newsavebox{\plotpoint}\fi 
\setlength{\unitlength}{3.8pt}
\begin{picture}(105,25.625)(0,0)
\put(11.693,15.527){\line(-1,0){.088}}
\put(11.279,15.997){\circle*{.94}}
\put(3.678,15.909){\circle*{.94}}
\put(7.804,15.837){\oval(15.026,4.861)[]}
\put(7.567,22.361){\circle*{.94}}
\put(11.125,16){\line(0,1){.25}}
\multiput(7.625,22.375)(-.033505155,-.042525773){97}{\line(0,-1){.042525773}}
\multiput(7.625,22.375)(.03353659,-.05030488){82}{\line(0,-1){.05030488}}
\put(6.625,15.625){\circle*{.25}}
\put(7.509,15.625){\circle*{.25}}
\put(8.393,15.625){\circle*{.25}}
\put(8.968,6.831){\circle*{.94}}
\put(5.914,6.831){\circle*{.94}}
\put(6.83,6.801){\circle*{.25}}
\put(7.356,6.801){\circle*{.25}}
\put(7.883,6.801){\circle*{.25}}
\put(7.563,6.625){\oval(8.125,3.5)[]}
\put(7.5,13.375){\line(0,-1){5.125}}
\put(7.688,13.438){\oval(15.375,20.375)[]}
\put(15.97,22.345){\circle*{.94}}
\multiput(15.875,22.25)(-.097826087,-.033695652){115}{\line(-1,0){.097826087}}
\multiput(16,22.375)(-.048913043,-.033695652){115}{\line(-1,0){.048913043}}
\put(67.093,6.706){\circle*{.94}}
\put(64.031,6.706){\circle*{.94}}
\put(64.956,6.676){\circle*{.25}}
\put(65.474,6.676){\circle*{.25}}
\put(66.004,6.676){\circle*{.25}}
\put(65.687,6.625){\oval(8.125,3.5)[]}
\put(61.191,18.658){\circle*{.94}}
\put(62.718,13.456){\circle*{.94}}
\put(59.652,13.456){\circle*{.94}}
\put(60.581,13.426){\circle*{.25}}
\put(61.096,13.426){\circle*{.25}}
\put(61.628,13.426){\circle*{.25}}
\put(61.313,13.375){\oval(8.125,3.5)[]}
\put(70.566,18.658){\circle*{.94}}
\put(72.093,13.456){\circle*{.94}}
\put(69.027,13.456){\circle*{.94}}
\put(69.956,13.426){\circle*{.25}}
\put(70.474,13.426){\circle*{.25}}
\put(71.004,13.426){\circle*{.25}}
\put(70.687,13.375){\oval(8.125,3.5)[]}
\multiput(61.119,18.75)(-.03358209,-.05597015){67}{\line(0,-1){.05597015}}
\multiput(61.245,18.625)(.03333333,-.06041667){60}{\line(0,-1){.06041667}}
\multiput(70.622,18.75)(-.03357317,-.0472561){82}{\line(0,-1){.0472561}}
\multiput(70.495,18.75)(.03325,-.05664063){64}{\line(0,-1){.05664063}}
\put(63.372,8){\line(0,1){0}}
\put(66.063,12.25){\oval(18.875,17.5)[]}
\put(66.093,23.095){\circle*{.94}}
\multiput(66.122,23.25)(.03333333,-.13541667){60}{\line(0,-1){.13541667}}
\multiput(66.122,23)(.033730159,-.041666667){189}{\line(0,-1){.041666667}}
\put(63.622,11.625){\line(0,-1){3.625}}
\put(68.37,11.75){\line(0,-1){3.75}}
\put(65.843,18.595){\circle*{.94}}
\put(65.998,18.625){\line(0,1){0}}
\put(61.122,18.625){\line(1,0){4.875}}
\multiput(65.87,18.625)(-.066086538,-.033653846){104}{\line(-1,0){.066086538}}
\multiput(65.998,18.625)(-.03343023,-.04069767){86}{\line(0,-1){.04069767}}
\put(65.748,18.625){\line(1,0){4.875}}
\multiput(66.122,23.25)(-.037313433,-.03358209){134}{\line(-1,0){.037313433}}
\multiput(66.122,23)(.035432836,-.03358209){134}{\line(1,0){.035432836}}
\put(37.468,6.581){\circle*{.94}}
\put(34.401,6.581){\circle*{.94}}
\put(35.331,6.551){\circle*{.25}}
\put(35.849,6.551){\circle*{.25}}
\put(36.379,6.551){\circle*{.25}}
\put(36.063,6.5){\oval(8.125,3.5)[]}
\put(31.566,18.533){\circle*{.94}}
\put(33.093,13.331){\circle*{.94}}
\put(30.026,13.331){\circle*{.94}}
\put(30.955,13.301){\circle*{.25}}
\put(31.47,13.301){\circle*{.25}}
\put(32.005,13.301){\circle*{.25}}
\put(31.688,13.25){\oval(8.125,3.5)[]}
\put(40.941,18.533){\circle*{.94}}
\put(42.468,13.331){\circle*{.94}}
\put(39.4,13.331){\circle*{.94}}
\put(40.331,13.301){\circle*{.25}}
\put(40.849,13.301){\circle*{.25}}
\put(41.378,13.301){\circle*{.25}}
\put(41.063,13.25){\oval(8.125,3.5)[]}
\multiput(31.491,18.625)(-.03358209,-.05597015){67}{\line(0,-1){.05597015}}
\multiput(31.616,18.5)(.03333333,-.06041667){60}{\line(0,-1){.06041667}}
\multiput(40.998,18.625)(-.03360976,-.0472561){82}{\line(0,-1){.0472561}}
\multiput(40.869,18.625)(.03325,-.05664063){64}{\line(0,-1){.05664063}}
\put(33.746,7.875){\line(0,1){0}}
\put(36.438,12.125){\oval(18.875,17.5)[]}
\put(36.468,22.97){\circle*{.94}}
\multiput(36.496,22.875)(-.03372093,-.037209302){215}{\line(0,-1){.037209302}}
\multiput(36.496,23)(-.03370787,-.08988764){89}{\line(0,-1){.08988764}}
\multiput(36.496,23.125)(.03333333,-.13541667){60}{\line(0,-1){.13541667}}
\multiput(36.496,22.875)(.033563158,-.041447368){190}{\line(0,-1){.041447368}}
\put(33.996,11.5){\line(0,-1){3.625}}
\put(38.746,11.625){\line(0,-1){3.75}}
\put(97.817,15.524){\line(-1,0){.088}}
\put(97.401,15.994){\circle*{.94}}
\put(89.801,15.906){\circle*{.94}}
\put(93.925,15.837){\oval(15.026,4.861)[]}
\put(93.691,22.361){\circle*{.94}}
\put(97.248,15.997){\line(0,1){.25}}
\multiput(93.748,22.373)(-.033505155,-.042536082){97}{\line(0,-1){.042536082}}
\multiput(93.748,22.373)(.03353659,-.05031707){82}{\line(0,-1){.05031707}}
\put(92.748,15.623){\circle*{.25}}
\put(93.631,15.623){\circle*{.25}}
\put(94.516,15.623){\circle*{.25}}
\put(95.093,6.831){\circle*{.94}}
\put(92.037,6.831){\circle*{.94}}
\put(92.953,6.798){\circle*{.25}}
\put(93.48,6.798){\circle*{.25}}
\put(94.007,6.798){\circle*{.25}}
\put(93.687,6.622){\oval(8.125,3.5)[]}
\put(93.622,13.372){\line(0,-1){5.125}}
\put(93.813,13.438){\oval(15.375,20.375)[]}
\put(102.093,22.343){\circle*{.94}}
\put(4.875,10.25){\makebox(0,0)[cc]{\scriptsize$F\in \mathcal{T}_{n-1}^{s-1}$}}
\put(32.5,9.875){\makebox(0,0)[cc]{\scriptsize$F\in \mathcal{T}_{n-1}^{s-1}$}}
\put(62.874,10){\makebox(0,0)[cc]{\scriptsize$F\in \mathcal{T}_{n-1}^{s-1}$}}
\put(92.999,10.625){\makebox(0,0)[cc]{\scriptsize$F\in \mathcal{T}_{n-1}^{s-1}$}}
\put(7.875,.125){\makebox(0,0)[cc]{\footnotesize$F_1\in \mathcal{T}_n^s(I)$}}
\put(36.124,0){\makebox(0,0)[cc]{\footnotesize$F_2\in \mathcal{T}_n^s(II)$}}
\put(65.999,.125){\makebox(0,0)[cc]{\footnotesize$F_3\in \mathcal{T}_n^s(III)$}}
\put(93.749,.125){\makebox(0,0)[cc]{\footnotesize$F_4\in \mathcal{T}_n^s(IV)$}}
\put(17.5,23.25){\makebox(0,0)[cc]{\scriptsize$u$}}
\put(9.25,21.625){\makebox(0,0)[cc]{\scriptsize$v$}}
\put(38.499,23.25){\makebox(0,0)[cc]{\scriptsize$u$}}
\put(30.125,17.875){\makebox(0,0)[cc]{\scriptsize$v$}}
\put(42.874,18.125){\makebox(0,0)[cc]{\scriptsize$w$}}
\put(67.749,23.625){\makebox(0,0)[cc]{\scriptsize$u$}}
\put(72.374,18.25){\makebox(0,0)[cc]{\scriptsize$v$}}
\put(66.249,16.625){\makebox(0,0)[cc]{\scriptsize$x$}}
\put(59.499,18.125){\makebox(0,0)[cc]{\scriptsize$y$}}
\put(103.749,20.875){\makebox(0,0)[cc]{\scriptsize$u$}}
\end{picture}
\end{center}

\caption{The graphs $F_1$, $F_2$, $F_3$ and $F_4$ (for Example~\ref{exa:e}).}\label{fig-7}
\end{figure}

\newpage

\section*{Acknowledgement}
The second and fifth authors are supported by the National Natural Science Foundation of China (Grant Nos.~11971274). The fourth author is supported by the Serbian Ministry of Education, Science and Technological Development via the University of Belgrade, Faculty of Mathematics.

\end{document}